\newtheorem{theorem}{Theorem}
\newtheorem{lemma}[theorem]{Lemma}
\begin{document}

\begin{center}
\vskip 1cm{\LARGE\bf
Hessenberg--Toeplitz Matrix Determinants with Schr\"{o}der and Fine Number Entries\\
\vskip .1in } \vskip 1cm
\end{center}

\large\begin{center}
Taras Goy\\
Faculty of Mathematics and Computer Science\\
Vasyl Stefanyk Precarpathian National University \\
Ivano-Frankivsk 76018\\
Ukraine\\
{\tt tarasgoy@pnu.edu.ua} \\
\vskip .2 in
Mark Shattuck\\
Department of Mathematics\\
University of Tennessee\\
Knoxville, TN 37996\\
USA\\
{\tt mark.shattuck2@gmail.com}\\
\end{center} \normalsize

\vskip .2 in

\begin{abstract}
In this paper, we find determinant formulas of several Hessenberg--Toeplitz matrices whose nonzero entries are derived from the small and large Schr\"{o}der and Fine number sequences.  Algebraic proofs of these results can be given which make use of Trudi's formula and the generating function of the associated sequence of determinants.  We also provide direct arguments of our results that utilize various counting techniques, among them sign-changing involutions, on combinatorial structures related to classes of lattice paths enumerated by the Schr\"{o}der and Fine numbers. As a consequence of our results, we obtain some new formulas for the Schr\"{o}der and Catalan numbers as well as for some additional sequences from the OEIS in terms of determinants of certain Hessenberg--Toeplitz matrices.
\end{abstract}

\section{Introduction}

Let $S_n$ denote the $n$-th \emph{large} Schr\"{o}der number given by
$$S_n=\frac{1}{n}\sum_{k=1}^n2^k\binom{n}{k}\binom{n}{k-1}, \qquad n \geq 1,$$
with $S_0=1$.  The \emph{small} Sch\"{o}der number $s_n$ is defined as $s_n=\frac{1}{2}S_n$ for $n \geq 1$, with $s_0=1$.  The $n$-th Fine number, denoted here by $t_n$, is given by
$$t_n=3\sum_{k=1}^{\lfloor\frac{n+1}{2}\rfloor}\binom{2n-2k}{n-1}-\binom{2n}{n}, \qquad n \geq 1,$$
with $t_0=0$.  The first several terms of the sequences $s_n$ and $t_n$ for $n \geq 0$ are as follows:
$$\{s_n\}_{n \geq 0}=\{1,1,3,11,45,197,903,4279,20793,103049,518859,\ldots\}$$
and
$$\{t_n\}_{n\geq0}=\{0,1,0,1,2,6,18,57,186,622,2120,\ldots\}.$$

We will make use of in our proofs the (ordinary) generating functions for $S_n$, $s_n$ and $t_n$, which are given respectively by
\begin{align*}
\sum_{n\geq0}S_nx^n&=\frac{1-x-\sqrt{1-6x+x^2}}{2x},\\
\sum_{n\geq0}s_nx^n&=\frac{1+x-\sqrt{1-6x+x^2}}{4x},\\
\sum_{n\geq0}t_nx^n&=\frac{1+2x-\sqrt{1-4x}}{2(2+x)}.
\end{align*}
Let $C_n=\frac{1}{n+1}\binom{2n}{n}$ denote the $n$-th Catalan number (see A000108 in \cite{Sl}) and recall
$$\sum_{n\geq0}C_nx^n=\frac{1-\sqrt{1-4x}}{2x}.$$  The preceding three sequences are closely aligned with $C_n$. For example,
we have for $n \geq 1$,
$$
S_n=\sum_{k=0}^{n}\binom{n+k}{2k}C_k \quad \text{ and } \quad
t_{n+1} = \frac{1}{2}\sum_{k=2}^{n}\frac{C_k}{(-2)^{n-k}}.
$$
Additional relations for the Fine numbers are given by $C_{n}=2t_{n+1}+t_{n}$ and
$$
t_{n+1} = C_n-\sum_{k=0}^{n-1} C_kt_{n-k}.
$$

The sequences $S_n$, $s_n$ and $t_n$ arise in various settings in enumerative and algebraic combinatorics and give the cardinalities of some important classes of first quadrant lattice paths \cite{Callan,Chen,Deng,Deutsch,DSh,Shapiro}.  See entries A006318, A001003 and A000957, respectively, in \cite{Sl} for further information.  Here, we will be interested in some new combinatorial properties of these numbers related to their occurrence in certain Hessenberg--Toeplitz matrices.

Many relations for Schr\"{o}der and Fine numbers have previously been found (see, e.g., \cite{DSh,Qi-KJM,Qi-Integers} and references contained therein), and determinants of matrices with Schr\"{o}der or Fine number entries and their generalizations have attracted recent attention.  A couple of basic results in this direction involve Hankel determinants for the Schr\"{o}der numbers, namely, $\det\!\big(S_{i+j}\big)_{i,j=0}^{n-1} =2^{\binom{n}{2}}$ and $\det\!\big(S_{i+j+1}\big)_{i,j=0}^{n-1}=2^{\binom{n+1}{2}}$.  The comparable formulas for the Fine numbers (see \cite{DSh}) are given by $\det\!\big(t_{i+j+1}\big)_{i,j=0}^{n-1} =1$ and $\det\!\big(t_{i+j+2}\big)_{i,j=0}^{n-1} =1-n$.  These results have been generalized in different ways by considering various families of Catalan-like sequences; see, e.g., \cite{El,MW} and reference contained therein.

In \cite{Qi}, Qi presents negativity results for a class of Toeplitz--Hessenberg determinants whose elements contain the products of the factorial and the large Schr\"{o}der numbers.  By using Cramer's rule together with a generating function approach, Deutsch \cite{Deutsch} obtained the following Fine--Catalan determinant relation
\begin{equation}\label{Deutsch}
t_n=(-1)^{n-1} \left|\begin{array}{ccccccc}
C_0        & 1      & 1   &  \cdots       & 0      & 0 \\
C_1        & C_0       & 1   & \cdots        & 0      & 0 \\
C_2        & C_1       & C_0      & \cdots        & 0      & 0 \\
\cdots     & \cdots    & \cdots & \ddots & \cdots & \cdots \\
C_{n-2}   & C_{n-3}  & C_{n-4} & \cdots  & C_0    & 1 \\
C_{n-1}     & C_{n-2}& C_{n-3}   & \cdots & C_1    & C_0
\end{array}\right|,
\end{equation}
which was later rediscovered by the authors in \cite{GSh-Cat} (formula (2.21) with $a=-1$). In \cite{GSh-Cat}, the authors found determinants of several families of Toeplitz--Hessenberg matrices having various subsequences of the Catalan sequence for the nonzero entries. These determinant formulas may  also be rewritten equivalently as identities involving sums of products of Catalan numbers and multinomial coefficients. Comparable results featuring combinatorial arguments have been found for the generalized Fibonacci (Horadam), tribonacci and tetranacci numbers in \cite{GSh-Hor,GSh-tri,GSh-tetr}.  See also \cite{BBD}, where further results for Horadam number permanents and determinants are obtained using an algebraic approach.

The organization of this paper is as follows.  In the next section, we find formulas providing algebraic arguments of several Hessenberg--Toeplitz matrices whose nonzero entries are given by $S_n$, $s_n$, $t_n$ or translates thereof. As a consequence of our results, one obtains new determinant expressions, and hence combinatorial interpretations, for $S_n$, $s_n$ and $C_n$, as well as for several additional sequences occurring in \cite{Sl}.  Further, equivalent multi-sum versions of these determinant identities may be obtained using Trudi's formula (see Lemma \ref{lem1} below).  In the third section, we provide combinatorial proofs of the preceding formulas upon making use of various counting techniques such as direct enumeration, bijections between equinumerous structures and sign-changing involutions.  In doing so, we draw upon the well-known combinatorial interpretations of $S_n$, $C_n$, $s_n$ and $t_n$ as enumerators of certain classes of first quadrant lattice paths.

\section{Schr\"{o}der and Fine number determinant formulas}

A Hessenberg--Toeplitz matrix is one having the form
\begin{equation}\label{DetHess}
A_n:= A_n(a_0; a_1,\ldots,a_n)=\left(\begin{array}{ccccccc}
a_1        & a_0      & 0   &  \cdots       & 0      & 0 \\
a_2        & a_1       & a_0   & \cdots        & 0      & 0 \\
a_3        & a_2       & a_1      & \cdots        & 0      & 0 \\
\cdots     & \cdots    & \cdots & \ddots & \cdots & \cdots \\
a_{n-1}   & a_{n-2}  & a_{n-3} & \cdots  & a_1    & a_0 \\
a_{n}     & a_{n-1}& a_{n-2}   & \cdots & a_2    & a_1
\end{array}\right),
\end{equation}
where $a_0\neq 0$.   The following multinomial expansion of $\det(A_n)$ in terms of a sum of products of the $a_i$ is known as \emph{Trudi's formula} (see, e.g., \cite[Theorem 1]{MM1}).

\begin{lemma}\label{lem1}
Let $n$ be a positive integer.  Then
\begin{equation}\label{Trudi}
\det(A_n)=\sum_{\widetilde{v}=n}(-a_0)^{n-|v|}{|v| \choose v_1,\ldots,v_n}a_1^{v_1}a_2^{v_2}\cdots a_{n}^{v_n},
\end{equation}
where
$$
{|v|\choose v_1,\ldots,v_n}=\frac{|v|!}{v_1!v_2!\cdots v_n!},\quad  \widetilde{v}=v_1+2v_2+\cdots+nv_n, \quad |v|=v_1+v_2+\cdots+v_n,\,\, v_i\geq 0.$$
Equivalently, we have
\begin{equation*}
\det(A_n) =\sum_{k=1}^n(-a_0)^{n-k}\sum\limits_{\begin{smallmatrix} i_1,\ldots,i_k\geq1\\ i_1+i_2+\cdots+i_{k}=n \end{smallmatrix}}a_{i_1}a_{i_2}\cdots a_{i_k}.
\end{equation*}
\end{lemma}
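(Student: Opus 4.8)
The plan is to reduce the whole statement to a single linear recurrence for the determinant and then read off both displayed formulas from its generating function. Write $D_n=\det(A_n)$ for $n\geq1$ and set $D_0=1$. The central step is to establish
\[
D_n=\sum_{k=1}^{n}(-a_0)^{k-1}a_k\,D_{n-k},\qquad n\geq1 .
\]
To obtain this I would expand $\det(A_n)$ along its last row $(a_n,a_{n-1},\ldots,a_1)$, whose $(n,j)$ entry is $a_{n-j+1}$. After deleting row $n$ and column $j$, the surviving minor is \emph{block lower-triangular}: its leading $(j-1)\times(j-1)$ block is exactly $A_{j-1}$, the block above it (early rows, columns $j+1,\ldots,n$) vanishes because those positions lie strictly above the superdiagonal, and the trailing $(n-j)\times(n-j)$ block is triangular with the constant $a_0$ along its diagonal. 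Hence the minor equals $a_0^{\,n-j}D_{j-1}$, and collecting the cofactor signs $(-1)^{n+j}$ gives $\det(A_n)=\sum_{j=1}^{n}(-a_0)^{n-j}a_{n-j+1}D_{j-1}$, which is the recurrence after the reindexing $k=n-j+1$.

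Next I would pass to generating functions, treating the $a_i$ as formal indeterminates. Let $D(x)=\sum_{n\geq0}D_nx^n$ and $\Phi(x)=\sum_{k\geq1}(-a_0)^{k-1}a_kx^k$. The recurrence is precisely the identity $D(x)-1=\Phi(x)D(x)$, so that $D(x)=(1-\Phi(x))^{-1}=\sum_{m\geq0}\Phi(x)^m$. Expanding a power as a sum over $m$-tuples of positive integers gives
\[
\Phi(x)^m=\sum_{i_1,\ldots,i_m\geq1}(-a_0)^{(i_1+\cdots+i_m)-m}a_{i_1}\cdots a_{i_m}\,x^{\,i_1+\cdots+i_m},
\]
and extracting the coefficient of $x^n$ (only $1\leq m\leq n$ contribute) yields exactly the second, composition-form expression for $\det(A_n)$ with $k=m$.

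Finally I would recover the multinomial form by grouping the compositions $(i_1,\ldots,i_m)$ of $n$ according to the multiset of their parts: a multiset using $v_i$ copies of $i$ has $|v|=m$ parts and $\widetilde{v}=\sum_i iv_i=n$, arises from $\binom{|v|}{v_1,\ldots,v_n}$ distinct compositions, and carries weight $(-a_0)^{n-|v|}a_1^{v_1}\cdots a_n^{v_n}$; summing over all admissible $v$ produces the first display. I expect the one genuine obstacle to be the bookkeeping in the recurrence step—confirming that the minor splits into the $A_{j-1}$ block and the triangular $a_0$-block and that the signs consolidate into $(-a_0)^{k-1}$—whereas the generating-function and regrouping steps are routine once the recurrence is in hand. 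A fully self-contained alternative is to bypass generating functions entirely and instead check by induction on $n$ that the composition-form sum satisfies this same recurrence and the initial condition $D_0=1$.
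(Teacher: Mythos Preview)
Your argument is correct. The cofactor expansion along the last row does produce the recurrence $D_n=\sum_{k=1}^n(-a_0)^{k-1}a_kD_{n-k}$ exactly as you describe: after deleting row $n$ and column $j$, the upper-right block vanishes (since those entries lie strictly above the superdiagonal), the upper-left $(j-1)\times(j-1)$ block is $A_{j-1}$, and the lower-right $(n-j)\times(n-j)$ block is lower triangular with $a_0$ along its diagonal, giving minor $a_0^{\,n-j}D_{j-1}$; the sign $(-1)^{n+j}$ combines with $a_0^{n-j}$ to give $(-a_0)^{k-1}$ under $k=n-j+1$. The passage to $D(x)=(1-\Phi(x))^{-1}$, the extraction of the composition-form coefficient, and the regrouping into the multinomial form are all routine and correctly executed.

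As for comparison with the paper: the paper does not actually prove this lemma. It is stated as a known result (Trudi's formula) with a citation to Merca \cite{MM1}, so there is no in-paper argument to compare against. Your derivation via the Laplace recurrence followed by the geometric-series expansion of the generating function is one of the standard proofs of this identity and would serve perfectly well as a self-contained justification.
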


It is seen that the sum in \eqref{Trudi} may be regarded as being over the set of partitions of the positive integer $n$. The special case of Trudi when $a_0=1$ is known as \emph{Brioschi's formula} \cite{Muir}. Here, we will focus on some cases of $\det(A_n)$ when $a_0=\pm 1$.  For the sake of brevity, we denote $\det\big(A_n(\pm1; a_1, a_2,\ldots, a_n)\big)$ by $D_{\pm}(a_1, a_2,\ldots, a_n)$.

There is the following inversion theorem involving $a_i$ and the corresponding sequence of Hessenberg--Toeplitz determinants when $a_0=1$ (see \cite[Lemma~4]{KP}):

\begin{lemma}\label{lem2}
Let $(b_n)_{n\geq0}$ be defined by $b_n=\det(A_n)$ for $n\geq 1$, where $A_n$ is given by \eqref{DetHess} with $a_0=b_0=1$.  If $B_n$ denotes the Hessenberg--Toeplitz matrix associated with $b_0,\ldots,b_n$, then $a_n=\det(B_n)$ for $n \geq 1$.
\end{lemma}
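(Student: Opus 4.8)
The plan is to translate everything into ordinary generating functions and to recognize that passing from a sequence to its sequence of Hessenberg--Toeplitz determinants is, at the level of generating functions, the operation $F(x)\mapsto 1/F(-x)$, which is manifestly an involution. Throughout, set $A(x)=\sum_{n\geq0}a_nx^n$ and $B(x)=\sum_{n\geq0}b_nx^n$, where $a_0=b_0=1$. Since $A(0)=1$, both $A(x)$ and $A(-x)$ are units in the ring $\mathbb{Q}[[x]]$ of formal power series, so their reciprocals are defined.

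The first and main step is to prove the key identity $A(-x)B(x)=1$ directly from Trudi's formula. Starting from the equivalent (composition) form of Lemma \ref{lem1} with $a_0=1$, I would write $b_n=\sum_{k=1}^{n}(-1)^{n-k}\sum_{i_1+\cdots+i_k=n}a_{i_1}\cdots a_{i_k}$ and recognize the inner sum as the coefficient of $x^n$ in $\widehat{A}(x)^k$, where $\widehat{A}(x)=A(x)-1=\sum_{n\geq1}a_nx^n$ has zero constant term (so $\widehat{A}(x)^k$ begins in degree $k$ and the relevant sums are finite). Factoring the sign as $(-1)^{n-k}=(-1)^n(-1)^k$, absorbing $(-1)^k$ into the geometric series $\sum_{k\geq1}(-1)^k\widehat{A}(x)^k=-\widehat{A}(x)/(1+\widehat{A}(x))=-1+1/A(x)$, and using that the $-1$ contributes nothing to positive-degree coefficients, one obtains $b_n=(-1)^n[x^n]\bigl(1/A(x)\bigr)=[x^n]\bigl(1/A(-x)\bigr)$ for every $n\geq0$ (the case $n=0$ holding since $A(0)=1$). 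Hence $B(x)=1/A(-x)$, i.e. $A(-x)B(x)=1$.

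With this identity secured, the inversion is immediate. Because $b_0=1$, the sequence $(b_n)$ is an admissible input, so applying the identity to it shows that $c_n:=\det(B_n)$, with $c_0=1$ and generating function $C(x)=\sum_{n\geq0}c_nx^n$, satisfies $B(-x)C(x)=1$. Replacing $x$ by $-x$ in $A(-x)B(x)=1$ gives $B(-x)=1/A(x)$, whence $C(x)=1/B(-x)=A(x)$ and therefore $c_n=a_n$ for all $n\geq0$; in particular $\det(B_n)=a_n$ for $n\geq1$, as claimed. The hard part will be the first step: the delicate point is the bookkeeping of the exponent in $(-1)^{n-k}$, which mixes the extraction index $n$ with the summation index $k$, and the resolution is exactly the factorization $(-1)^{n-k}=(-1)^n(-1)^k$ together with the substitution $x\mapsto -x$. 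Once $A(-x)B(x)=1$ is in place, the involutivity of $F(x)\mapsto 1/F(-x)$ does all the remaining work and no further combinatorial input is needed. (As an alternative to the Trudi route, one could instead expand $\det(A_n)$ along its first row to derive the recurrence $b_n=\sum_{k=1}^{n}(-1)^{k-1}a_kb_{n-k}$ and read off the same convolution $A(-x)B(x)=1$; I would favor invoking Lemma \ref{lem1}, since that formula is already available.)
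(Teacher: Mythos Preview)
Your proof is correct. The paper does not actually prove Lemma~\ref{lem2}; it simply cites \cite[Lemma~4]{KP}. That said, your argument is entirely in the spirit of the paper: in the proof of Theorem~\ref{Theo1}, the authors derive from Trudi's formula precisely the relation $f(x)=g(x)/(1-g(x))$ with $g(x)=\sum_{i\geq1}(-a_0)^{i-1}a_ix^i$, which for $a_0=1$ is equivalent to your identity $B(x)=1/A(-x)$. So your route---summing the composition form of Lemma~\ref{lem1} into a geometric series to obtain $A(-x)B(x)=1$, and then observing that $F(x)\mapsto 1/F(-x)$ is an involution---is exactly the generating-function machinery the paper already has on hand, just turned toward this particular inversion statement rather than toward specific sequences. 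The alternative you mention (first-column or first-row Laplace expansion yielding $b_n=\sum_{k=1}^n(-1)^{k-1}a_kb_{n-k}$, hence $A(-x)B(x)=1$ directly as a convolution) is arguably even cleaner, since it bypasses Trudi entirely; but either way the content is the same and the involutivity step does the rest.
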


We have the following determinant identity formulas involving the large and small Schr\"{o}der numbers.

\begin{theorem} \label{Theo1} We have
	\begin{align}
	D_{+}(S_1, S_2,\ldots,S_{n})&=(-1)^{n-1}S_{n-1}, \qquad n \geq 2,\label{Theo1e1}\\
	D_{-}(S_1,S_2,\ldots,S_{n})&=2\cdot A134425[n-1],\label{Theo1e2}\\
   D_{+}(S_0,S_1,\ldots,S_{n-1})&=(-1)^{n-1}s_{n-1},\label{Theo1e3}\\
   D_{-}(S_0,S_1,\ldots,S_{n-1})&=s_n, \qquad \label{Theo1e4}\\
	D_{+}(s_1,s_2,\ldots,s_n)&=(-1)^{n-1}S_{n-1}, \label{Theo1e5}\\
D_{-}(s_1,s_2,\ldots,s_{n})&=A225887[n-1],\label{Theo1e6}\\
   D_{+}(s_0,s_1,\ldots,s_{n-1})&=(-1)^{n-1}A114710[n-1],\label{Theo1e7}\\
   D_{-}(s_0,s_1,\ldots,s_{n-1})&=S_{n-1}, \qquad \label{Theo1e8}\\
   D_{+}(s_2,s_3,\ldots,s_{n+1})&=(-1)^{n-1}S_{n-1}, \qquad n \geq 2, \qquad \label{Theo1e9}
	\end{align}
where all formulas hold for $n \geq 1$ unless stated otherwise.
\end{theorem}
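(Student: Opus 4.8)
The plan is to compute, for each determinant sequence appearing in the theorem, its ordinary generating function in closed form and then to match it against the generating function of the claimed right-hand side. The key enabling fact is that Trudi's formula (Lemma \ref{lem1}) sums as a geometric series: writing $A(x)=\sum_{k\geq1}a_kx^k$ for the generating function of the nonzero entries and $D(x)=1+\sum_{n\geq1}\det(A_n)x^n$, one obtains
$$D(x)=\frac{1}{1+A(-x)}\quad(a_0=1),\qquad D(x)=\frac{1}{1-A(x)}\quad(a_0=-1).$$
Thus each sequence $D_{\pm}(\cdots)$ acquires an explicit generating function assembled from $S(x)=\sum_{n\geq0}S_nx^n$ and $s(x)=\sum_{n\geq0}s_nx^n$, and the entire theorem reduces to a finite list of identities between algebraic functions of $x$ involving the surd $\sqrt{1\pm6x+x^2}$.

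First I would dispatch the large Schr\"{o}der cases \eqref{Theo1e1}--\eqref{Theo1e4}. For \eqref{Theo1e1} the entries are $a_k=S_k$, so $A(x)=S(x)-1$, $1+A(-x)=S(-x)$, and $D(x)=1/S(-x)$; rationalizing the surd yields $D(x)=\tfrac12\big(1+x+\sqrt{1+6x+x^2}\big)$, which one checks equals $1+2x+\sum_{n\geq2}(-1)^{n-1}S_{n-1}x^n$, the generating function dictated by the right-hand side. The shifted entries $S_{k-1}$ in \eqref{Theo1e3}--\eqref{Theo1e4} give $A(x)=xS(x)$; here $D_{-}$ collapses especially cleanly, since $1/(1-xS(x))=s(x)$ reproduces \eqref{Theo1e4} on the nose, while $1/(1-xS(-x))$ rationalizes to the generating function of $(-1)^{n-1}s_{n-1}$ for \eqref{Theo1e3}. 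The small Schr\"{o}der formulas \eqref{Theo1e5}--\eqref{Theo1e9} are handled the same way, the sole subtlety being that $s_0=1$ while $s_k=\tfrac12S_k$ for $k\geq1$, so the constant term of $A(x)$ must be adjusted by hand; for \eqref{Theo1e9}, with entries $s_{k+1}$, this gives $A(x)=x^{-1}\big(s(x)-1-x\big)$ and $D(x)=x/(1-s(-x))=\tfrac12\big(1+3x+\sqrt{1+6x+x^2}\big)$, matching $(-1)^{n-1}S_{n-1}$ once the values $d_0=1$ and $d_1=s_2=3$ are reconciled.

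The nine identities share enough structure that several follow from others with little extra effort. For instance \eqref{Theo1e1} and \eqref{Theo1e5} yield sequences that agree for $n\geq2$ and differ only at $n=1$ (namely $S_1=2$ versus $s_1=1$), so their generating functions differ merely by the monomial $x$. More substantially, Lemma \ref{lem2} supplies an inversion which, in generating-function terms, sends $\widehat{A}(x)=\sum_{n\geq0}a_nx^n$ (with $a_0=1$) to $1/\widehat{A}(-x)$ and is an involution; this exactly pairs \eqref{Theo1e3} with \eqref{Theo1e8}, and combined with the elementary sign rule $D_{+}(a_1,\ldots,a_n)=(-1)^nD_{-}(-a_1,\ldots,-a_n)$ coming from Trudi's formula, which reconciles the $D_{+}$ form delivered by the inversion with the $D_{-}$ form as stated, it lets one of this pair be deduced from the other.

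The main obstacle I anticipate is twofold. On the computational side, the delicate steps are the repeated, sign-sensitive rationalizations of $\sqrt{1\pm6x+x^2}$ carried out in the presence of the index shifts of the translated sequences, where a single off-by-one in the constant or linear coefficient silently destroys the match; I would guard against this by cross-checking each closed form against the first several explicitly evaluated determinants. On the conceptual side, the genuine work lies in \eqref{Theo1e2}, \eqref{Theo1e6} and \eqref{Theo1e7}, whose right-hand sides are the sequences A134425, A225887 and A114710: here there is no independent closed form to aim at, so the argument must run in reverse, evaluating $D(x)$ explicitly and then verifying that it agrees with the known generating function of the relevant OEIS sequence. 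This simultaneously proves the identity and establishes the new determinantal interpretations of those sequences advertised in the introduction.
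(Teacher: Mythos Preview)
Your proposal is correct and follows essentially the same approach as the paper. The paper likewise converts Trudi's formula into a generating-function identity (written there as $f(x)=g(x)/(1-g(x))$ with $g(x)=\sum_{i\ge1}(-a_0)^{i-1}a_ix^i$, which is equivalent to your $D(x)=1/(1+A(-x))$ or $1/(1-A(x))$ after adding the constant term $1$), computes $f(x)$ explicitly in each case by rationalizing $\sqrt{1\pm6x+x^2}$, matches against the known generating functions (including the OEIS formula for A134425), and invokes Lemma~\ref{lem2} to pair \eqref{Theo1e3}$\leftrightarrow$\eqref{Theo1e8} and \eqref{Theo1e4}$\leftrightarrow$\eqref{Theo1e5}.
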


Making use of Lemma \ref{lem1} yields the following multinomial identities for the two kinds of Schr\"{o}der numbers.

\begin{theorem} \label{Theo2} We have
	\begin{align}
	\sum_{\widetilde{v}=n}(-1)^{|v|-1}{|v|\choose v_1,\ldots,v_n}S_1^{v_1}S_2^{v_2}\cdots S_{n}^{v_n}&=
	S_{n-1},\quad n \geq 2,\label{F1}\\
\sum_{\widetilde{s}=n}{|v|\choose v_1,\ldots,v_n}S_1^{v_1}S_2^{v_2}\cdots S_{n}^{v_n}&=2\cdot A134425[n-1],  \label{F2}\\
	\sum_{\widetilde{v}=n}(-1)^{|v|-1}{|v|\choose v_1,\ldots,v_n}S_0^{v_1}S_1^{v_2}\cdots S_{n-1}^{v_n}&=s_{n-1},\label{F3}\\
\sum_{\widetilde{v}=n}{|v|\choose v_1,\ldots,v_n}S_0^{v_1}S_1^{v_2}\cdots S_{n-1}^{v_n}&=s_n, \label{F4}\\
\sum_{\widetilde{v}=n}(-1)^{|v|-1}{|v|\choose v_1,\ldots,v_n}s_1^{v_1}s_2^{v_2}\cdots s_{n}^{v_n}&=
	S_{n-1},\label{F5}\\
\sum_{\widetilde{s}=n}{|v|\choose v_1,\ldots,v_n}s_1^{v_1}s_2^{v_2}\cdots s_{n}^{v_n}&=A225887[n],  \label{F6}\\
	\sum_{\widetilde{v}=n}(-1)^{|v|-1}{|v|\choose v_1,\ldots,v_n}s_0^{v_1}s_1^{v_2}\cdots s_{n-1}^{v_n}&=A114710[n-1],\label{F7}\\
\sum_{\widetilde{v}=n}{|v|\choose v_1,\ldots,v_n}s_0^{v_1}s_1^{v_2}\cdots s_{n-1}^{v_n}&=S_{n-1}, \label{F8}\\
\sum_{\widetilde{v}=n}(-1)^{|v|-1}{|v|\choose v_1,\ldots,v_n}s_2^{v_1}s_3^{v_2}\cdots s_{n+1}^{v_n}&=S_{n-1}, \quad n \geq 2,\label{F9}
	\end{align}
where all formulas hold for $n \geq 1$ unless stated otherwise.
	\end{theorem}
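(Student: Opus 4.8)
The plan is to derive each of the nine identities directly from the corresponding determinant evaluation in Theorem \ref{Theo1} by invoking Trudi's formula (Lemma \ref{lem1}). Every determinant appearing in Theorem \ref{Theo1} is of the form $D_{\pm}(a_1,\ldots,a_n)$ with $a_0=\pm1$, so the factor $(-a_0)^{n-|v|}$ in \eqref{Trudi} specializes to $(-1)^{n-|v|}$ in the plus case and to $1$ in the minus case. Thus each determinant formula translates immediately into a weighted multinomial sum over the partitions of $n$, and the task reduces to reconciling the sign conventions and index shifts between the two theorems.

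I would organize the argument as a uniform, case-by-case verification. For the plus identities \eqref{F1}, \eqref{F3}, \eqref{F5}, \eqref{F7} and \eqref{F9}, I would set $a_0=1$ and take the $a_i$ to be the relevant Schr\"{o}der entries, so that Lemma \ref{lem1} gives $D_{+}(a_1,\ldots,a_n)=\sum_{\widetilde{v}=n}(-1)^{n-|v|}\binom{|v|}{v_1,\ldots,v_n}\prod_i a_i^{v_i}$. The key observation is the elementary identity $(-1)^{n-|v|}=(-1)^{n-1}(-1)^{|v|-1}$, which lets me factor $(-1)^{n-1}$ out of the sum. Matching this against the right-hand side of the corresponding line of Theorem \ref{Theo1}, which also carries a factor $(-1)^{n-1}$, and cancelling yields precisely the stated identity with its $(-1)^{|v|-1}$ weight.

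For the minus identities \eqref{F2}, \eqref{F4}, \eqref{F6} and \eqref{F8}, I would instead set $a_0=-1$, so that $(-a_0)^{n-|v|}=1$ and the multinomial sum carries no sign; equating Trudi's expansion of $D_{-}(a_1,\ldots,a_n)$ with the unsigned right-hand side from Theorem \ref{Theo1} then gives the result at once. Throughout, care must be taken to record the correct index shift: for instance, when the nonzero entries are $S_0,\ldots,S_{n-1}$ one has $a_i=S_{i-1}$, producing the product $S_0^{v_1}S_1^{v_2}\cdots S_{n-1}^{v_n}$ as written, and the analogous remark applies to the $s_0,\ldots,s_{n-1}$ and $s_2,\ldots,s_{n+1}$ families.

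Since Theorem \ref{Theo2} is a formal corollary of Theorem \ref{Theo1}, there is no genuine analytic obstacle here; the only point requiring attention is the bookkeeping of signs and shifts so that each of the nine lines is paired with the matching determinant evaluation. The substantive content, namely the evaluation of the determinants themselves, resides entirely in Theorem \ref{Theo1}, whose proof supplies everything needed.
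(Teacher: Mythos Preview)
Your proposal is correct and matches the paper's own approach exactly: the paper states that Theorems \ref{Theo1} and \ref{Theo2} are equivalent by Trudi's formula \eqref{Trudi}, and then proves only Theorem \ref{Theo1}. Your sign bookkeeping via $(-1)^{n-|v|}=(-1)^{n-1}(-1)^{|v|-1}$ in the $a_0=1$ case and the trivialization $(-a_0)^{n-|v|}=1$ in the $a_0=-1$ case is precisely what is needed to pass between the two theorems.
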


The identities in Theorems  \ref{Theo1} and \ref{Theo2} are seen to be equivalent by \eqref{Trudi}, so we need only prove the former.

\begin{proof}
Let $f(x)=\sum_{n\geq1}\det(A_n)x^n$, where $A_n$ is given by \eqref{DetHess}.  Then rewriting \eqref{Trudi} in terms of generating functions implies
$$f(x)=\sum_{n\geq 1}(-a_0x)^n\sum_{\widetilde{v}=n}
	 {|v|\choose v_1,\ldots,v_n}
	\left(-\frac{a_1}{a_0}\right)^{v_1}\cdots\left(-\frac{a_n}{a_0}\right)^{v_n}=\frac{g(x)}{1-g(x)},$$
where $g(x)=\sum_{i \geq 1}(-a_0)^{i-1}a_ix^i$.

We consider several cases on $a_i$. First let $a_i=S_i$ for $i \geq 1$.  Then
$$g(x)=\sum_{i \geq 1}(-a_0)^{i-1}S_ix^i=\frac{1+3a_0x-\sqrt{1+6a_0x+a_0^2x^2}}{2a_0^2x}.$$
If $a_0=1$, then
\begin{align*}
f(x)&=\frac{g(x)}{1-g(x)}=\frac{1+3x-\sqrt{1+6x+x^2}}{-1-x+\sqrt{1+6x+x^2}}=2x-\frac{1}{2}\left(1+3x-\sqrt{1+6x+x^2}\right)\\
&=2x+\sum_{n\geq 2}(-1)^{n-1}S_{n-1}x^n,
\end{align*}
which implies \eqref{Theo1e1}.  If $a_0=-1$, then
\begin{align*}
f(x)&=\frac{g(x)}{1-g(x)}=\frac{1-3x-\sqrt{1-6x+x^2}}{-1+5x+\sqrt{1-6x+x^2}}=\frac{4x}{1-7x+\sqrt{1-6x+x^2}}\\
&=\sum_{n\geq 1}2\cdot A134425[n-1]x^n,
\end{align*}
which implies \eqref{Theo1e2}, upon recalling the formula $\sum_{n\geq0}A134425[n]x^n=\frac{2}{1-7x+\sqrt{1-6x+x^2}}$ (see OEIS article).

Now let $a_i=S_{i-1}$ for $i \geq 1$.  In this case, we have
$$g(x)=\sum_{i\geq 1}(-a_0)^{i-1}S_{i-1}x^i=\frac{-1-a_0x+\sqrt{1+6a_0x+a_0^2x^2}}{2a_0}.$$
If $a_0=1$, then
$$f(x)=\frac{-1-x+\sqrt{1+6x+x^2}}{3+x-\sqrt{1+6x+x^2}}=\frac{-1+x+\sqrt{1+6x+x^2}}{4}=\sum_{n\geq1}(-1)^{n-1}s_{n-1}x^n,$$
which gives \eqref{Theo1e3}, whereas if $a_0=-1$, then
$$f(x)=\frac{1-x-\sqrt{1-6x+x^2}}{1+x+\sqrt{1-6x+x^2}}=\frac{1-3x-\sqrt{1-6x+x^2}}{4x}=\sum_{n\geq 1}s_nx^n,$$
which gives \eqref{Theo1e4}.

Similar proofs may be given for \eqref{Theo1e5}--\eqref{Theo1e8}.  Alternatively, formulas \eqref{Theo1e5} and \eqref{Theo1e8}  follow from \eqref{Theo1e4} and \eqref{Theo1e3}, respectively, upon applying Lemma \ref{lem2} since

	\begin{align*}
	&D_+(s_1,\ldots,s_n)=(-1)^{n-1}S_{n-1} \text{ if and only if }\\ &D_-(S_0,\ldots,S_{n-1})=D_+(S_0,-S_1,\ldots,(-1)^{n-1}S_{n-1})=s_n
	\end{align*}
	and
	\begin{align*}
	&D_+(S_0,\ldots,S_{n-1})=(-1)^{n-1}s_{n-1} \text{ if and only if }\\
	&D_-(s_0,\ldots,s_{n-1})=D_+(s_0,-s_1,\ldots,(-1)^{n-1}s_{n-1})=S_{n-1}.
	\end{align*}

Finally, to show \eqref{Theo1e9}, let $a_i=s_{i+1}$ for $i \geq 1$ and $a_0=1$ to get
$$g(x)=\sum_{i\geq1}(-1)^{i-1}s_{i+1}x^i=\frac{-1-3x+4x^2+\sqrt{1+6x+x^2}}{4x^2}.$$
Thus,
\begin{align*}
f(x)&=\frac{g(x)}{1-g(x)}=\frac{-1-3x+4x^2+\sqrt{1+6x+x^2}}{1+3x-\sqrt{1+6x+x^2}}\\
&=3x+\frac{1}{2}\left(-1-3x+\sqrt{1+6x+x^2}\right)=3x+\sum_{n\geq 2}(-1)^{n-1}S_{n-1}x^n,
\end{align*}
which completes the proof.
\end{proof}

We have the following Fine number determinant formulas.

\begin{theorem}\label{Theo3}
We have
\begin{align}
D_+(t_1,t_2,\ldots,t_{n})&=u_n,\label{Theo3e1}\\
D_-(t_1,t_2,\ldots,t_n)&=C_{n-1}, \label{Theo3e2}\\
D_+(t_2,t_3,\ldots,t_{n+1})&=(-1)^{n-1}C_{n-1}, \quad n \geq 2,\label{Theo3e3}\\
D_-(t_2,t_3,\ldots,t_{n+1})&=A137398[n],  \label{Theo3e4}\\
D_+(t_3,t_4,\ldots,t_{n+2})&=(-1)^{n-1}A030238[n-1],\label{Theo3e5}\\
D_+(t_4,t_5,\ldots,t_{n+3})&=(-1)^{n-1}C_{n-1}, \qquad n \geq 3, \label{Theo3e6}
\end{align}
where all formulas hold for $n \geq 1$ unless stated otherwise and $u_n$ denotes the sequence defined recursively by
$u_n=u_{n-1}+\sum_{i=1}^{n-2}(-1)^{i+1}C_iu_{n-i-1}$ if $n \geq 3$
with $u_1=u_2=1$.
\end{theorem}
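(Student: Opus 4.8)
The plan is to prove Theorem \ref{Theo3} by the same generating function technique used for Theorem \ref{Theo1}, invoking the master identity $f(x)=\frac{g(x)}{1-g(x)}$ established there, where $f(x)=\sum_{n\geq1}\det(A_n)x^n$ and $g(x)=\sum_{i\geq1}(-a_0)^{i-1}a_ix^i$. Each of the six formulas corresponds to a specific choice: $a_i=t_{i+r}$ for $r\in\{0,1,2,3\}$ together with $a_0=+1$ or $a_0=-1$. First I would record, once and for all, the reflected and shifted Fine series built from $T(x)=\sum_{i\geq1}t_ix^i=\frac{1+2x-\sqrt{1-4x}}{2(2+x)}$, since every $g(x)$ arising here is a simple rational modification of $T(x)$ or $T(-x)$; for instance $a_i=t_i$, $a_0=1$ gives $g(x)=-T(-x)$, while $a_i=t_i$, $a_0=-1$ gives $g(x)=T(x)$, and the shifts $a_i=t_{i+r}$ introduce a factor $x^{-r}$ together with the subtraction of the first few terms.

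For the closed-form cases \eqref{Theo3e2}--\eqref{Theo3e6}, the computation is then routine rationalization of expressions in $\sqrt{1-4x}$ and $\sqrt{1+4x}$. The one tool that makes this clean is the Fine--Catalan relation $C_n=2t_{n+1}+t_n$ (for $n\geq1$) from the introduction, which in generating function form reads $C(x)+1=\frac{2+x}{x}\,T(x)$ with $C(x)=\sum_{n\geq0}C_nx^n$; this converts between the two radicals and immediately yields, e.g., $1/T(x)=\frac{1+2x+\sqrt{1-4x}}{2x}$. Using it, \eqref{Theo3e2} reduces to showing $f(x)=\frac{T(x)}{1-T(x)}=\frac{1-\sqrt{1-4x}}{2}=xC(x)$, and the alternating and shifted variants \eqref{Theo3e3}--\eqref{Theo3e6} follow by matching $f(x)$ against the generating functions of $C_n$, A137398 and A030238 recorded in \cite{Sl}. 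Because the series $f(x)$ will generally carry a few exceptional low-order terms (as the $2x$ and $3x$ corrections did in the proof of Theorem \ref{Theo1}), I expect the stated range restrictions $n\geq2$ in \eqref{Theo3e3} and $n\geq3$ in \eqref{Theo3e6} to emerge precisely from discarding those initial terms.

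The main obstacle is \eqref{Theo3e1}, since $u_n$ is given only recursively. Here the plan is to translate the recurrence into a functional equation and then verify that $f(x)$ satisfies it. Writing $U(x)=\sum_{n\geq1}u_nx^n$ and $P(x)=\sum_{i\geq1}(-1)^{i+1}C_ix^i=1-C(-x)$, the recurrence $u_n=u_{n-1}+\sum_{i=1}^{n-2}(-1)^{i+1}C_iu_{n-i-1}$ for $n\geq3$ is equivalent, after summing against $x^n$, to $U(x)\bigl(1-x-xP(x)\bigr)=x$, with the initial data $u_1=u_2=1$ absorbed into the same equation. It then remains to check that the relevant $f(x)=\frac{g(x)}{1-g(x)}$ with $g(x)=-T(-x)$ obeys the identical equation, which upon taking reciprocals reduces to the single algebraic identity $\frac{1}{g(x)}=\frac1x-1+C(-x)$, that is $-\frac{1}{T(-x)}=\frac1x-1+C(-x)$. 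This last identity follows directly from the reflected Fine--Catalan relation, and a short check of the leading coefficients $[x]f=[x^2]f=1$ confirms the initial conditions, completing \eqref{Theo3e1}. Since the bulk of the remaining effort is bookkeeping of radicals, I anticipate the recurrence-to-generating-function translation in \eqref{Theo3e1} to be the only genuinely non-mechanical step.
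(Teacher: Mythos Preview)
Your proposal is correct and follows essentially the same approach as the paper: the paper states that proofs comparable to those for \eqref{Theo1e1}--\eqref{Theo1e9} apply to \eqref{Theo3e1}--\eqref{Theo3e6} and illustrates only \eqref{Theo3e6} via the master identity $f(x)=g(x)/(1-g(x))$, which is exactly your method. Your treatment of \eqref{Theo3e1}---converting the recurrence for $u_n$ into the functional equation $U(x)\bigl(1-x-xP(x)\bigr)=x$ and reducing it to the reflected Fine--Catalan identity $-1/T(-x)=1/x-1+C(-x)$---is more explicit than what the paper writes out, but fits squarely within the same generating function framework.
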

\begin{proof}
Proofs comparable to those given for \eqref{Theo1e1}--\eqref{Theo1e9} may also be given for \eqref{Theo3e1}--\eqref{Theo3e6}.  We illustrate using formula \eqref{Theo3e6}.  First note that
$$\sum_{n\geq 1}t_{n+3}x^n=\sum_{n\geq 3}t_{n+1}x^{n-2}=\frac{1}{x^2}\left(\frac{2}{1+2x+\sqrt{1-4x}}-1-x^2\right),$$
and hence we have
$$g(x)=\sum_{n\geq 1}(-1)^{n-1}t_{n+3}x^n=-\frac{1}{x^2}\left(\frac{1+2x-x^2+2x^3-(1+x^2)\sqrt{1+4x}}{1-2x+\sqrt{1+4x}}\right).$$
This gives
\begin{align*}
\sum_{n\geq 3}\det(A_n)x^n&=\frac{g(x)}{1-g(x)}-\det(A_1)x-\det(A_2)x^2\\
&=\frac{-1-2x+x^2-2x^3+(1+x^2)\sqrt{1+4x}}{1+2x-\sqrt{1+4x}}-2x+2x^2\\
&=\frac{-1-4x-x^2+2x^3+(1+2x-x^2)\sqrt{1+4x}}{1+2x-\sqrt{1+4x}}\\
&=\frac{\left(-1-4x-x^2+2x^3+(1+2x-x^2)\sqrt{1+4x}\right)\left(1+2x+\sqrt{1+4x}\right)}{4x^2}\\
&=\frac{-2x^2\left(1+2x-2x^2-\sqrt{1+4x}\right)}{4x^2}=x\left(\frac{1-\sqrt{1+4x}}{-2x}-1+x\right)\\
&=x\sum_{n\geq 2}C_n(-x)^n=\sum_{n\geq 3}(-1)^{n-1}C_{n-1}x^n,
\end{align*}
which implies \eqref{Theo3e6}.
\end{proof}

\section{Combinatorial proofs}

In this section, we provide combinatorial proofs of formulas \eqref{Theo1e1}--\eqref{Theo1e9} and \eqref{Theo3e1}--\eqref{Theo3e6}.  Let us first recall combinatorial interpretations of the sequences $S_n$, $s_n$ and $t_n$ which we will make use of in our proofs and define some related terms.  Let $\mathcal{P}_n$ denote the set of lattice paths (called \emph{Schr\"{o}der} paths) from the origin to the point $(2n,0)$ that never go below the $x$-axis using $u=(1,1)$, $d=(1,-1)$ and $h=(2,0)$ steps.  Then $S_n=|\mathcal{P}_n|$ for all $n \geq 0$, where $\mathcal{P}_0$ is understood to consist of the empty path of length zero.  Half the horizontal distance traversed by a Schr\"{o}der path $\lambda$ will be referred to here as the \emph{length} of $\lambda$ and is denoted by $|\lambda|$.  Note that $|\lambda|$ equals the sum of numbers of $u$ and $h$ steps in $\lambda$. (We remark that the term \emph{semi-length} is often used in the literature, instead of length, for the quantity indicated, though we prefer the latter due to brevity.)  An $h$ step connecting two points with $y$-coordinate $\ell\geq 0$ is said to be of \emph{height} $\ell$.  A \emph{low} $h$ step will refer to an $h$ step of height $0$. The subset of $\mathcal{P}_n$ whose members contain no low $h$ steps will be denoted by $\mathcal{Q}_n$, with its members referred to as \emph{restricted} Schr\"{o}der paths. Then it is well-known that $s_n=|\mathcal{Q}_n|$ for $n \geq 0$.  Hence, since $S_n=2s_n$ if $n \geq 1$, we have that exactly half the members of $\mathcal{P}_n$ are restricted.

Let $\mathcal{D}_n$ denote the subset of $\mathcal{P}_n$ whose members contain no $h$ steps. Members of $\mathcal{D}_n$ are referred to as \emph{Dyck} paths, with $|\mathcal{D}_n|=C_n$ for $n\geq0$.  A member of $\mathcal{D}_n$ is said to have a \emph{peak} of height $i$ where $1 \leq i \leq n$ if there exists a $u$ directly followed by a $d$ in which the $u$ has ending height $i$. Let $\mathcal{E}_n$ denote the subset of $\mathcal{D}_n$ whose members contain no peaks of height $1$.  Then it is well-known that $t_n=\mathcal{E}_{n-1}$ for $n \geq 1$, with $t_0=0$.

By a \emph{return} within a member of  $\mathcal{P}_n$, we mean an $h$ or $u$ step that terminates on the $x$-axis.  A \emph{terminal} return is one that has endpoint $(2n,0)$, with all other returns being referred to as \emph{non-terminal}. By a \emph{unit} within $\lambda \in \mathcal{P}_n$, we mean a subpath of $\lambda$ occurring between two adjacent returns or prior to the first return.  Note that a low $h$ step comprises its own unit with all other units of the form $u\sigma d$ for some possibly empty Schr\"{o}der path $\sigma$.  Within members of $\mathcal{E}_n$, all units must have length at least two, whereas members of $\mathcal{Q}_n$ can also contain units of the form $ud$, but not $h$.  Finally, a member of $\mathcal{P}_n$ having no non-terminal returns is said to be \emph{primitive}. A primitive member $\lambda \in \mathcal{P}_n$ for $n \geq 2$ is necessarily of the form $\lambda=u\sigma d$, where $\sigma \in \mathcal{P}_{n-1}$, and hence belongs to $\mathcal{Q}_n$.

We compute the determinant of an $n\times n$ Hessenberg--Toeplitz matrix using the definition of a determinant as a signed sum over the set of permutations $\sigma$ of $[n]$.  In doing so, one need only consider those $\sigma$ whose cycles when expressed disjointly each comprise a set of consecutive integers.  Such $\sigma$ are clearly in one-to-one correspondence with the compositions of $n$, upon identifying the various cycle lengths with parts of a composition.  This implies that the determinant of a matrix $A_n$ of the form \eqref{DetHess} may be regarded as a weighted sum over the set of compositions of $n$.  If $a_0=1$ in this sum, then each part of size $r \geq 1$ has (signed) weight given by $(-1)^{r-1}a_r$ (regardless of its position) and the weight of a composition is the product of the weights of its constituent parts.  One can then define the sign of a composition as $(-1)^{n-m}$, where $m$ denotes the number of its parts.  On the other hand, when $a_0=-1$, every part of size $r$ now contributes $a_r$ towards the weight of the composition.  Thus, assuming $a_i \geq 0$ for $i \geq 1$, each term in the determinant sum for $A_n$ is non-negative in this case.  Note that computing $\det(A_n)$ where $a_0=-1$ is equivalent to finding the permanent of the matrix obtained from $A_n$ by replacing $a_0=-1$ with $a_0=1$.

We now provide combinatorial proofs of the formulas from Theorems \ref{Theo1} and \ref{Theo3} above. \medskip

\noindent\textbf{Proofs of \eqref{Theo1e1}, \eqref{Theo1e2}, \eqref{Theo1e5} and \eqref{Theo1e6}:}\medskip

Let $\mathcal{A}_n$ denote the set of marked Schr\"{o}der paths of length $n$ in which returns to the $x$-axis may be marked and whose final return is always marked.  Define the sign of $\lambda \in \mathcal{A}_n$ by $(-1)^{n-\mu(\lambda)}$, where $\mu(\lambda)$ denotes the number of marked returns of $\lambda$.  Let $\mathcal{A}_n'\subseteq\mathcal{A}_n$ consist of those members of $\mathcal{A}_n$ in which there are no low $h$ steps (marked or unmarked). Then $D_+(S_1,\ldots,S_n)$ and $D_+(s_1,\ldots,s_n)$ give the sum of the signs of all members of $\mathcal{A}_n$ and $\mathcal{A}_n'$, respectively.  To see this, first suppose $\tau$ is a member of $\mathcal{A}_n$ or $\mathcal{A}_n'$ and is derived from the (weighted) composition $\sigma$ in either determinant expansion.  That is, $\tau$ is obtained from $\sigma$ by overlaying a member of $\mathcal{P}_r$ or $\mathcal{Q}_r$ on each part of $\sigma$ of size $r$ for every $r$, marking the final return of each path and finally concatenating the paths in the same order as the parts of $\sigma$.  Then the sequence of part sizes of $\sigma$ corresponds to the sequence of lengths of the subpaths occurring between adjacent marked returns of $\tau$ (or prior to the first marked return), and, in particular, the number of parts of $\sigma$ equals the number of marked returns of $\tau$.  Thus, the sign of $\sigma$ in the determinant expansion corresponds to $n-\mu(\tau)$, and considering all $\tau$ associated with each $\sigma$ implies $D_+(S_1,\ldots,S_n)$ and $D_+(s_1,\ldots,s_n)$ give the sum of the signs of the members of $\mathcal{A}_n$ and $\mathcal{A}_n'$, respectively, as claimed.

We define a sign-changing involution on $\mathcal{A}_n$ by identifying the leftmost non-terminal return and either marking it or removing the marking from it.  The set of survivors of this involution consists of the \emph{primitive} members of $\mathcal{A}_n$.  If $n \geq 2$, then there are $S_{n-1}$ primitive members of $\mathcal{A}_n$, each of sign $(-1)^{n-1}$, which implies \eqref{Theo1e1}.  Since the survivors of the involution all belong to $\mathcal{A}_n'$, this establishes \eqref{Theo1e5} as well.

On the other hand, it is seen from the preceding that $D_-(S_1,\ldots,S_n)$ and $D_-(s_1,\ldots,s_n)$ give the cardinalities of the sets $\mathcal{A}_n$ and $\mathcal{A}_n'$, respectively, since when $a_0=-1$ the sign of $\sigma$ is cancelled out by the product of the superdiagonal $-1$ factors in the term corresponding to $\sigma$ in the determinant expansion.  We first show \eqref{Theo1e6}.  Let $\mathcal{P}_n^*$ denote the set of colored members of $\mathcal{P}_n$ wherein each low $h$ step is colored in one of three ways.  Recall one of the combinatorial interpretations of $A225887[n]$ is that it gives the cardinality of $\mathcal{P}_{n}^*$ for $n \geq 0$.  Thus, to complete the proof of \eqref{Theo1e6}, it suffices to define a bijection $\phi: \mathcal{P}_{n-1}^* \rightarrow \mathcal{A}_n'$.  Let $h_a$, $h_b$, $h_c$ denote the three kinds of colored low $h$ steps within $\lambda \in \mathcal{P}_{n-1}^*$.  We decompose $\lambda$ as $\lambda=\lambda^{(1)}\cdots \lambda^{(r)}$ for some $r \geq 1$, where each subpath $\lambda^{(i)}$ for $1 \leq i \leq r-1$ ends in either $h_a$ or $h_b$, with all other low $h$ steps in $\lambda$ (if there are any) equal to $h_c$, and $\lambda^{(r)}$ is possibly empty.  Note that if $\lambda$ contains no $h_a$ or $h_b$ steps, then we take $r=1$ and $\lambda=\lambda^{(1)}$; further, if $\lambda$ ends in $h_a$ or $h_b$, then $r \geq 2$ with $\lambda^{(r)}$ understood to be empty in this case. If $1 \leq i \leq r-1$ and $\lambda^{(i)}$ ends in $h_a$ with $\lambda^{(i)}=\alpha_ih_a$, where $\alpha_i$ is possibly empty, then let $\overline{\lambda}^{(i)}=u\alpha_id$, where the final $d$ is marked (i.e., the return to the $x$-axis associated with this $d$ is marked). If $\lambda^{(i)}=\beta_ih_b$, then let $\overline{\lambda}^{(i)}=u\beta_id$, where the final $d$ is unmarked.  Finally, let $\overline{\lambda}^{(r)}=u\lambda^{(r)}d$, where the final $d$ is marked.  Define $\phi(\lambda)=\overline{\lambda}^{(1)}\cdots\overline{\lambda}^{(r)}$ as the concatenation of the lattice paths $\overline{\lambda}^{(i)}$.  Note that $\phi$ can be reversed, and hence is bijective, as desired, upon considering the positions of the returns and whether or not they are marked.  Further, it is seen that the number of $h_c$ steps within $\lambda$ equals the number of $h$ steps of height $1$ within $\phi(\lambda)$ for all $\lambda$.

We now show \eqref{Theo1e2}. Let $\mathcal{\widetilde{P}}_n$ denote the set derived from members of $\mathcal{P}_n$ by stipulating that the low $h$ steps come in one of four kinds, denoted by $h^{(i)}$ for $1 \leq i \leq 4$.  Recall that $A134425[n]$ gives $|\mathcal{\widetilde{P}}_n|$ for $n \geq0$, so for \eqref{Theo1e2}, we need to prove $|\mathcal{A}_n|=2|\mathcal{\widetilde{P}}_{n-1}|$ for $n \geq 1$. We proceed inductively, noting that the $n=1$ case of the equality is clear.  Let $n \geq 2$ and we consider the following cases on members $\lambda \in \mathcal{A}_n$: (i) $\lambda=\lambda'h$, (ii) $\lambda=\lambda'\alpha$, where $\alpha\neq h$ is a unit and $\lambda'$ is nonempty, with the final return of $\lambda'$ marked, or (iii) $\lambda=\lambda'\beta$, where $\beta \neq h$ is a unit and either $\lambda'=\varnothing$ or $\lambda'\neq \varnothing$ with the final return of $\lambda'$ not marked.  We partition $\rho \in \mathcal{\widetilde{P}}_{n-1}$ as follows: (I) $\rho$ ends in $h^{(1)}$ or $h^{(2)}$, (II)  $\rho=\rho'\alpha$, where $\alpha\neq h^{(i)}$ for any $i$ is a unit and $\rho'$ is possibly empty, or (III) $\rho$ ends in $h^{(3)}$ or $h^{(4)}$.

We now demonstrate for each of (i)--(iii) that there are twice as many members $\lambda \in \mathcal{A}_n$ as there are  $\rho \in \mathcal{\widetilde{P}}_{n-1}$ in the corresponding case (I)--(III). Upon considering whether or not the final return in $\lambda'$ is marked, it is seen by the induction hypothesis that there are twice as many $\lambda \in \mathcal{A}_n$ for which (i) applies as there are $\rho \in \mathcal{\widetilde{P}}_{n-1}$ for which (I) applies.  The same holds true of (ii) and (II) as $\lambda'$ in (ii) has length one greater than that of $\rho'$ in (II), with $\alpha$ the same in both cases.  To show that the same holds for cases (iii) and (III) above, observe first that the number of possible $\rho \in \mathcal{\widetilde{P}}_{n-1}$ in (III) is given by $2|\mathcal{\widetilde{P}}_{n-2}|$. Thus, to complete the proof of \eqref{Theo1e2}, it is enough to prove that there are $2|\mathcal{A}_{n-1}|$ possible $\lambda \in \mathcal{A}_n$ in (iii).

Let $\lambda=\lambda'\beta \in \mathcal{A}_n$, where $\beta \neq h$ is a unit and $\lambda'$ does not have a marked final return.  If $\lambda'=\varnothing$, i.e., $\lambda$ is primitive, then write $\beta=u\beta'd$ and regard $\beta'$ as a member of $\mathcal{A}_{n-1}$ in which only the final return is marked.  Otherwise, consider cases based on the length $\ell$ of $\beta$, where $1 \leq \ell \leq n-1$. If $\ell=1$, i.e., $\beta=ud$, then regard $\lambda'$ as a member of $\mathcal{A}_{n-1}$ by marking its last return.  If $\ell \geq 2$, then let $\beta=u\beta'd$, where $\beta'$ is nonempty.  Then form the lattice path $\sigma=\lambda'\beta'$ of length $n-1$, wherein the last return of $\lambda'$ and of $\beta'$ are now both marked (here, it is understood that all other returns of $\lambda'$ remain of the same status regarding whether or not they are marked and that all non-terminal returns of $\beta'$, if any, are unmarked).  Note that $\sigma \in \mathcal{A}_{n-1}$ with $\sigma$ containing at least two marked returns.  It is seen then that each member of $\mathcal{A}_{n-1}$ arises exactly twice when one performs the operations described above on the various members of $\mathcal{A}_n$ for which (iii) applies, upon considering whether or not a member of $\mathcal{A}_{n-1}$ contains two or more marked returns, and if it does, additionally taking into account the position of the rightmost non-terminal marked return.  This establishes the desired equality
$|\mathcal{A}_n|=2|\mathcal{\widetilde{P}}_{n-1}|$ for all $n \geq 1$, which completes the proof of \eqref{Theo1e2}.
\hfill \qed \medskip

\noindent\textbf{Proofs of \eqref{Theo1e3}, \eqref{Theo1e4}, \eqref{Theo1e7} and \eqref{Theo1e8}:}\medskip

Let $\mathcal{A}_n$ be as in the previous proof and let $\mathcal{B}_n\subseteq \mathcal{A}_n$ consist of those members in which all marked returns (including the final return) correspond to low $h$ steps. Let $\mathcal{B}_n'\subseteq \mathcal{B}_n$ consist of those members in which no low $h$ is unmarked.  Define the sign of $\lambda \in \mathcal{B}_n$ by $(-1)^{n-\mu(\lambda)}$, where $\mu(\lambda)$ denotes the number of marked low $h$'s.  Reasoning as in the prior proof, we have that $D_+(S_0,\ldots,S_{n-1})$ and $D_+(s_0,\ldots,s_{n-1})$ give the sum of the signs of all members of $\mathcal{B}_n$ and $\mathcal{B}_n'$, respectively.  To show  \eqref{Theo1e3}, we define a sign-changing involution on $\mathcal{B}_n$ by identifying the leftmost non-terminal low $h$ and either marking it or removing the marking from it.  This involution fails to be defined for paths of the form $\lambda=\alpha h$, where $\alpha \in \mathcal{Q}_{n-1}$ and $h$ is marked.  Thus, there are $s_{n-1}$ survivors of the involution, each of sign $(-1)^{n-1}$, which implies \eqref{Theo1e3}.  For \eqref{Theo1e7}, we define an involution on $\mathcal{B}_{n}'$ by identifying the leftmost non-terminal (marked) low h step or peak of height $1$ (i.e., unit of the form $ud$) and replacing one option with the other.  This involution is not defined on members $\rho=\beta h$, where $\beta\in\mathcal{P}_{n-1}$ contains no low $h$ steps or peaks of height $1$.  Note that there are $A114710[n-1]$ such $\rho$ for all $n \geq 1$, each with sign $(-1)^{n-1}$, which implies \eqref{Theo1e7}.

On the other hand, we have that $D_-(S_0,\ldots,S_{n-1})$ and $D_-(s_0,\ldots,s_{n-1})$ give the cardinalities of the sets $\mathcal{B}_n$ and $\mathcal{B}_n'$, respectively.  To show \eqref{Theo1e4}, consider decomposing $\rho \in \mathcal{B}_n$ as $\rho=\rho^{(1)}\cdots\rho^{(r)}$ for some $r \geq 1$, where each $\rho^{(i)}$ ends in a marked low $h$ step and contains no other marked steps. Write $\rho^{(i)}=\alpha_ih$ for $1 \leq i \leq r$, where $\alpha_i$ is possibly empty. Define $\overline{\rho}^{(i)}=u\alpha_i d$ and let $\overline{\rho}=\overline{\rho}^{(1)}\cdots\overline{\rho}^{(r)}$. Then the mapping $\rho \mapsto \overline{\rho}$ is seen to define a bijection between $\mathcal{B}_n$ and $\mathcal{Q}_n$ (to reverse it, consider positions of the returns in members of $\mathcal{Q}_n$), and hence $|\mathcal{B}_n|=s_n$, which implies \eqref{Theo1e4}.  Finally, members of $\mathcal{B}_n'$ and $\mathcal{P}_{n-1}$ are seen to be synonymous, upon removing the marking from all low $h$'s and disregarding the final $h$ in members of the former, which implies \eqref{Theo1e8}. \hfill \qed \medskip

\noindent\textbf{Proof of \eqref{Theo1e9}:}\medskip

Let $\mathcal{J}_{n,k}$ for $1 \leq k \leq n$ denote the set of ordered $k$-tuples $\lambda=(\lambda_1,\ldots,\lambda_k)$ wherein each $\lambda_i$ is a restricted Schr\"{o}der path having length at least two such that $\sum_{i=1}^k|\lambda_i|=n+k$.  Define the sign of $\lambda \in \mathcal{J}_{n,k}$ by $(-1)^{n-k}$ and let $\mathcal{J}_n=\cup_{k=1}^n\mathcal{J}_{n,k}$.  Then we have that $D_+(s_2,\ldots,s_{n+1})$ gives the sum of the signs of all members of $\mathcal{J}_n$.  We define a sign-changing involution of $\mathcal{J}_n$ which makes use of several cases as follows.  First suppose that the final component $\lambda_k$ of $\lambda \in \mathcal{J}_{n,k}$ is \emph{not} primitive. If $\lambda_k=u\sigma d\tau$, where $\sigma$ is a possibly empty Schr\"{o}der path and $|\tau|\geq 2$, then replace $\lambda_k$ with the two components $\lambda_k=\tau$ and $\lambda_{k+1}=u\sigma dud$, leaving all other components of $\lambda$ unchanged.  We perform the reverse operation, i.e., fusing the last two components and dropping $ud$, if the last component consists of a unit followed by $ud$.  This pairs all members of $\mathcal{J}_n$ in which the final component is not primitive except for those belonging to $\mathcal{J}_{n,1}$ where $\lambda_1=u\sigma dud$ for some $\sigma$.

Now suppose $\lambda_k$ within $\lambda$ is primitive. First assume $|\lambda_k|\geq 3$, and we consider the following further subcases:
\begin{align*}
(\text{i})~&\lambda_k=u\sigma d, \text{ with } \sigma \text{ containing no low } h\text{'s} \text{ and } |\sigma|\geq 2,\\
(\text{ii})~&\lambda_k=u\sigma'h\sigma''d, \text{ with } \sigma'\neq \varnothing \text{ and containing no low } h\text{'s} \text{ and } \sigma'' \text{ possibly empty},\\
(\text{iii})~&\lambda_k=uh\sigma d, \text{ with } \sigma\neq\varnothing,
\end{align*}
where $\sigma,\sigma',\sigma''$ denote Schr\"{o}der paths.  (Note that by $\sigma$ or $\sigma'$ not containing a low $h$ in the preceding, we mean when $\sigma$ or $\sigma'$ is viewed by itself starting from the origin.)  Now suppose $\rho=(\rho_1,\ldots,\rho_k) \in \mathcal{J}_{n,k}$, with $\rho_k$ primitive and $|\rho_k|=2$.  We consider the following subcases: (I) $\rho_k=u^2d^2$, (II) $\rho_k=uhd$, with $\rho_{k-1}$ not primitive, or (III) $\rho_k=uhd$, with $\rho_{k-1}$ primitive.  Note that $n \geq 2$ implies $k \geq 2$ in (I)--(III) and hence a penultimate component exists in each case. We now perform the following operations on the members of $\mathcal{J}_{n,k}$ in (i)--(iii) above (leaving all other components unchanged):
\begin{align*}
(\text{a})~&\lambda_k=u\sigma d \leftrightarrow \lambda_k=\sigma,~\lambda_{k+1}=u^2d^2,\\
(\text{b})~&\lambda_k=u\sigma'h\sigma''d  \leftrightarrow \lambda_k=\sigma'u\sigma''d,~\lambda_{k+1}=uhd,\\
(\text{c})~&\lambda_k=uh\sigma d \leftrightarrow \lambda_k=u\sigma d,~\lambda_{k+1}=uhd.
\end{align*}
Note that the assumptions on $\sigma,\sigma',\sigma''$ in (i)--(iii) imply that these operations are well-defined and it is seen that they are reversible in each case.  Hence, they provide bijections between the members of $\mathcal{J}_n$ satisfying (i), (ii) or (iii) and those satisfying (I), (II) or (III), respectively.  Since the number of components changes by one in all cases, each member of $\mathcal{J}_n$ whose final component is primitive is paired with another of opposite sign.  Thus, when taken together with the pairing defined in the preceding paragraph, we have that all members of $\mathcal{J}_n$ are paired except for $\lambda=(\lambda_1) \in \mathcal{J}_{n,1}$ such that $\lambda_1=u\sigma dud$ for some $\sigma \in \mathcal{P}_{n-1}$.  There are $S_{n-1}$ possibilities for these $\lambda$, each having sign $(-1)^{n-1}$, which implies formula \eqref{Theo1e9}.  \hfill \qed \medskip

\noindent\textbf{Proofs of \eqref{Theo3e1} and \eqref{Theo3e2}:}\medskip

We first find a combinatorial interpretation for $D_-(t_1,\ldots,t_n)$.  A \emph{short} unit within a member of $\mathcal{D}_n$ will refer to a unit having length one (i.e., is equal $ud$), with all other units being referred to as \emph{long}. Let $\mathcal{D}_n'$ denote the subset of $\mathcal{D}_n$ whose members have last unit short and hence $|\mathcal{D}_n'|=C_{n-1}$ for $n \geq 1$.  Suppose $\rho$ is a (weighted) composition of $n$ with $m$ parts occurring in the expansion of $D_-(t_1,\ldots,t_n)$.  On a part of size $r$ within $\rho$, we overlay $\alpha \in \mathcal{E}_{r-1}$ followed by $ud$.  We do this for each part of $\rho$ and concatenate the resulting lattice paths $\alpha ud$ to obtain a member of $\mathcal{D}_n'$ in which there are $m$ short units altogether.  Upon considering all possible $m$, we have that $D_{-}(t_1,\ldots,t_n)$ gives the cardinality of $\mathcal{D}_n'$, which implies \eqref{Theo3e2}.

To show \eqref{Theo3e1}, first note that $D_+(t_1,\ldots,t_n)$ gives the sum of the signs of all $\lambda \in \mathcal{D}_{n}'$, where the sign of $\lambda$ is defined as $(-1)^{n-\nu(\lambda)}$ and $\nu$ denotes the statistic recording the number of short units.  Let $r_n=D_+(t_1,\ldots,t_n)$ for $n \geq 1$; clearly, we have $r_1=r_2=1$, so we may assume $n \geq 3$.  Let $\rho \in \mathcal{D}_n'$.  If the first unit of $\rho$ has length $i+1$ for some $1 \leq i \leq n-2$, then the contribution towards the sum of signs is given by $(-1)^{i+1}C_ir_{n-i-1}$.  Summing over all $i$ yields a total contribution of $\sum_{i=1}^{n-2}(-1)^{i+1}C_ir_{n-i-1}$ for members of $\mathcal{D}_n'$ whose first unit is long.  On the other hand, if the first unit is short, then there are $r_{n-1}$ possibilities as no adjustment for the sign is required when prepending a short unit to a member of $\mathcal{D}_{n-1}'$.  Combining the prior cases of $\rho$ implies $r_n$ satisfies the desired recurrence and completes the proof.  \hfill \qed \medskip

\noindent\textbf{Proofs of \eqref{Theo3e3} and \eqref{Theo3e4}:}\medskip

Let $\mathcal{L}_n$ denote the set of marked members of $\mathcal{E}_n$ wherein the first unit is not marked and all other units may be marked.  Define the sign of $\lambda \in \mathcal{E}_n$ by $(-1)^{n-\mu(\lambda)}$, where $\mu(\lambda)$ denotes the number of unmarked units of $\lambda$. Then $D_+(t_2,\ldots,t_{n+1})$ and $D_-(t_2,\ldots,t_{n+1})$ are seen to give the sum of signs and cardinality, respectively, of the members of $\mathcal{L}_n$. To show \eqref{Theo3e3}, define an involution on $\mathcal{L}_n$ by marking or unmarking the second unit, if it exists.  This operation is not defined on the primitive members of $\mathcal{L}_n$, each of which has sign $(-1)^{n-1}$.  Since the primitive members of $\mathcal{L}_n$ have cardinality $C_{n-1}$ for $n \geq 2$, formula \eqref{Theo3e3} is established.

To show \eqref{Theo3e4}, let $b_n=D_-(t_2,\ldots,t_{n+1})$ for $n \geq 1$ and note
\begin{equation}\label{bneq1}
b_n=C_{n-1}+2\sum_{k=1}^{n-3}C_kb_{n-k-1}, \qquad n \geq 3,
\end{equation}
with $b_1=0$ and $b_2=1$, upon considering whether or not a member of $\mathcal{L}_n$ is primitive and, if not, taking into account the length $k+1$ of the first unit, where $1 \leq k \leq n-3$.  Here, the factor of 2 accounts for the choice concerning whether or not the second unit is marked in the latter case.  In order to establish $b_n=A137398[n]$, we must show that $b_n$ satisfies the defining recurrence for $A137398[n]$, i.e.,
\begin{equation}\label{bneq2}
b_n=2b_{n-1}+2b_{n-2}+\sum_{k=1}^{n-3}C_kb_{n-k-1}, \qquad n \geq 4.
\end{equation}
Comparing \eqref{bneq1} and \eqref{bneq2}, to complete the proof of \eqref{Theo3e4}, it suffices to show
\begin{equation}\label{bneq3}
C_{n-1}+\sum_{k=2}^{n-3}C_kb_{n-k-1}=2b_{n-1}+b_{n-2}, \qquad n \geq 4.
\end{equation}
We may assume $n \geq 5$ in \eqref{bneq3} since it is seen to hold for $n=4$.

To prove \eqref{bneq3}, we describe a combinatorial structure enumerated by the left side of the identity and show that this structure is also enumerated by the right. We will make use of the same descriptors short and long as before when referring to units of varying length. Let $\mathcal{Y}_n$ denote the set of all marked Dyck paths of length $n$ containing at least one short unit wherein long units occurring to the right of the rightmost short unit (if there are any) may be marked, but where the first such long unit is always unmarked.  Further, we require that the rightmost short unit within a member of $\mathcal{Y}_n$ correspond to the $(2i-1)$-st and $(2i)$-th steps for some $i \geq 3$.  Note that there are $C_{n-1}$ members of $\mathcal{Y}_n$ ending in a short unit, upon appending $ud$ to any member of $\mathcal{D}_{n-1}$.  Otherwise, $\lambda \in \mathcal{Y}_n$ is expressible as $\lambda=\lambda'ud\lambda''$, where $\lambda'$ is any Dyck path with $|\lambda'|\geq 2$ and $\lambda''$ is nonempty and consists of long units that may be marked, except for the first, which is always unmarked.  Then there are $C_kb_{n-k-1}$ possibilities for $\lambda$ in which $|\lambda'|=k$ and considering all possible $k \in [2,n-3]$ implies that there are $\sum_{k=2}^{n-3}C_kb_{n-k-1}$ members of $\mathcal{Y}_n$ that end in a long unit.  Thus, we have that the left-hand side of \eqref{bneq3} gives $|\mathcal{Y}_n|$.

We now show that $2b_{n-1}+b_{n-2}$ also gives $|\mathcal{Y}_n|$.  First let us take two copies of each $\alpha \in \mathcal{L}_{n-1}$, where it is assumed for now that $\alpha$ contains at least one marked unit.  Then write $\alpha=\alpha_1\cdots\alpha_{\ell-1}\alpha_\ell\cdots\alpha_r$, where the $\alpha_i$ denote the units of $\alpha$, the leftmost marked unit is $\alpha_\ell$ and $2 \leq \ell \leq r$.  Within the first copy of $\alpha$, we insert $ud$ directly between the units $\alpha_{\ell-1}$ and $\alpha_\ell$.  Within the second copy of $\alpha$, we replace $\alpha_{\ell-1}$ with $ud\alpha_{\ell}'ud$, where $\alpha_{\ell-1}=u\alpha_{\ell-1}'d$.  In both cases, we remove the mark from the unit $\alpha_\ell$ and leave all other units of $\alpha$ undisturbed. On the other hand, if $\alpha \in \mathcal{L}_{n-2}$ contains a marked unit and is decomposed into units as above, then we insert $udud$ between the units $\alpha_{\ell-1}$ and $\alpha_\ell$ and remove the mark from $\alpha_\ell$.  Note that the operations described in this paragraph yield uniquely all members of $\mathcal{Y}_n$ not ending in $ud$ and can be reversed by considering the position of the rightmost short unit and taking into account whether there are one or more short units.  If there are more than one,  then consider further whether or not the leftmost and rightmost short units are adjacent.

So it remains to show
$$2|\{\alpha \in \mathcal{L}_{n-1}: \alpha \text{ has no marked units}\}|+|\{\alpha \in \mathcal{L}_{n-2}: \alpha \text{ has no marked units}\}|$$
equals the number of members of $\mathcal{Y}_n$ ending in $ud$ (recall that this number is $C_{n-1}$).  Note that this equality is equivalent to the known relation $2t_n+t_{n-1}=C_{n-1}$ for $n \geq 2$; for a combinatorial proof, we refer the reader to \cite[Section~3]{DSh}.  This completes the proof of \eqref{bneq3}, as desired.\hfill \qed \medskip

\noindent\textbf{Proof of \eqref{Theo3e5}:}\medskip

Let $\mathcal{M}_{n,k}$ for $1 \leq k \leq n$ denote the set of ordered $k$-tuples $(\lambda_1,\ldots,\lambda_k)$ such that each $\lambda_i$ is a nonempty Dyck path all of whose units are long, with $\sum_{i=1}^k|\lambda_i|=n+k$.  Let members of $\mathcal{M}_{n,k}$ have sign $(-1)^{n-k}$.  Then it is seen that $D_+(t_3,\ldots,t_{n+2})$ gives the sum of the signs of all members of $\mathcal{M}_n$, where $\mathcal{M}_n=\cup_{k=1}^n\mathcal{M}_{n,k}$.  Before defining an involution on $\mathcal{M}_n$, let us recall a definition.  By a \emph{valley} of height $j$ within a Dyck path where $j \geq 0$, we mean a $d$ directly followed by a $u$ step in which the $u$ has starting height $j$.  A \emph{special} valley will refer to one of height $1$.  Let $\lambda=(\lambda_1,\ldots,\lambda_k) \in \mathcal{M}_{n,k}$ and suppose first that the component $\lambda_k$ contains at least one special valley.  We decompose $\lambda_k$ as $\lambda_k=\alpha\bf{du}\beta$, where $\alpha$ and $\beta$ contain $2a$ and $2b$ steps respectively and $\bf{du}$ denotes the rightmost special valley.  Note that $a,b \geq 1$, with $|\lambda_k|=a+b+1$.  Let $\lambda^*$ be obtained from $\lambda$ by replacing $\lambda_k$ with the two components $\lambda_k=\alpha d^2$ and $\lambda_{k+1}=u^2\beta$, keeping all other components of $\lambda$ the same.  One may verify $\lambda_k \in \mathcal{E}_{a+1}$, $\lambda_{k+1}\in\mathcal{E}_{b+1}$, and hence $\lambda^* \in \mathcal{M}_{n,k+1}$, with $\lambda_{k+1}$ containing no special valleys.  If it is the case that $\lambda \in M_{n,k}$ for some $k>1$ with $\lambda_k$ containing no special valleys, then $\lambda^*$ is obtained from $\lambda$ by reversing the operation described above.  The mapping $\lambda \mapsto \lambda^*$ is an involution of $\mathcal{M}_n$ which always changes the sign and is not defined on $\mathcal{M}_n' \subseteq \mathcal{M}_n$ consisting of those $\lambda=(\rho) \in \mathcal{M}_{n,1}$ such that $\rho$ contains no special valleys.

To enumerate the members of $\mathcal{M}_n'$, note that $\rho$ can be decomposed into units as $\rho=\rho_1\cdots\rho_j$ for some $j \geq 1$, where $\rho_i=u^2\rho_i'd^2$ for each $i$ with $\rho_i'$ possibly empty.  Let $a(n,j)$ denote the number of members of $\mathcal{D}_n$ that have $j$ returns.  Then removal of the initial $u$ and the final $d$ from each unit $\rho_i$ within $\rho$ implies that there are $a(n+1-j,j)$ possible $\rho$, and summing over all $j$ yields $|\mathcal{M}_n'|=\sum_{j=1}^{\lfloor(n+1)/2\rfloor}a(n+1-j,j)$.  Recall that one of the combinatorial properties for $A030238[n]$ is that it is given explicitly as $\sum_{j=1}^{\lfloor(n+2)/2\rfloor}a(n+2-j,j)$.  Hence, $|\mathcal{M}_n'|=A030238[n-1]$ for $n \geq 1$.  Since each member of $\mathcal{M}_n'$ has sign $(-1)^{n-1}$, the proof of \eqref{Theo3e5} is complete. \hfill \qed \medskip

\noindent\textbf{Proof of \eqref{Theo3e6}:}\medskip

Let $\mathcal{T}_{n,k}$ denote the set of ordered $k$-tuples $(\lambda_1,\ldots,\lambda_k)$ such that each $\lambda_i$ is a Dyck path of length at least three all of whose units are long, with $\sum_{i=1}^k|\lambda_i|=n+2k$.  Let members of $\mathcal{T}_{n,k}$ have sign $(-1)^{n-k}$ and let $\mathcal{T}_n=\cup_{k=1}^n\mathcal{T}_{n,k}$.  Then we have that $D_+(t_4,\ldots,t_{n+3})$ gives the sum of signs of all members of $\mathcal{T}_n$.  Let $\mathcal{T}_n'\subseteq \mathcal{T}_n$ consist of $(\lambda_1)\in \mathcal{T}_{n,1}$ such that $\lambda_1$ is expressible as $\lambda_1=u^2d^2\alpha$, where $\alpha$ is a unit.  Note that $n \geq 3$ implies $|\alpha| \geq 3$ and hence $\alpha$ is long, as required.  As there are $C_{n-1}$ possibilities for $\lambda_1$, we have $\sigma(\mathcal{T}_n')=(-1)^{n-1}C_{n-1}$, where $\sigma(S)$ denotes the sum of the signs of the members of a subset $S$ of $\mathcal{T}_n$.  Below, we define in several steps a sign-changing involution on the entirety of $\mathcal{T}_n-\mathcal{T}_n'$ when $n \geq 3$, which implies \eqref{Theo3e6}.

We first partition $\mathcal{T}_n-\mathcal{T}_n'$ into three subsets $\mathcal{U}_n$, $\mathcal{V}_n$ and $\mathcal{W}_n$ given by
\begin{align*}
(\text{i})~&\mathcal{U}_n=\{(\lambda_1,\ldots,\lambda_k) \in \mathcal{T}_n-\mathcal{T}_n': \lambda_k \ \text{ not primitive}\},\\
(\text{ii})~&\mathcal{V}_n=\{(\lambda_1,\ldots,\lambda_k) \in \mathcal{T}_n-\mathcal{T}_n': \lambda_k \ \text{ primitive and contains no special peaks}\},\\
(\text{iii})~&\mathcal{W}_n=\{(\lambda_1,\ldots,\lambda_k) \in \mathcal{T}_n-\mathcal{T}_n': \lambda_k \ \text{ primitive and contains at least one special peak}\},
\end{align*}
where $k \geq 1$ in each case and a \emph{special} peak is one of height two. We first define involutions on $\mathcal{U}_n$ and $\mathcal{V}_n$.  Let $(\lambda_1,\ldots,\lambda_k) \in \mathcal{U}_n$ and suppose $\lambda=\alpha\beta$, where $|\alpha|\geq 2$ and $\beta$ is a unit.  Then we replace the component $\lambda_k$ with the two components $\lambda_k=\alpha$ and $\lambda_{k+1}=u^2d^2\beta$, if $|\alpha| \geq 3$, or perform the inverse operation if $|\alpha|=2$ (i.e., $\alpha=u^2d^2$).  Note that the possible case where $k=1$ and $\lambda_1=u^2d^2\beta$ has been excluded from consideration since such members of $\mathcal{T}_n$ belong to $\mathcal{T}_n'$.  Thus, the two operations defined above taken together yield an involution, which we will denote by $\phi$, that is defined on all of $\mathcal{U}_n$.

Now suppose $(\lambda_1,\ldots,\lambda_k) \in \mathcal{V}_n$.  Then either $|\lambda_k| \geq 4$ and is primitive with no special peaks or $\lambda_k=u^3d^3$.  In the former case, we decompose $\lambda_k$ as  $\lambda_k=u\alpha d$, where $\alpha \geq 3$. If $|\lambda_k| \geq 4$, then replace the component $\lambda_k=u\alpha d$ with the two components $\lambda_k=\alpha$ and $\lambda_{k+1}=u^3d^3$, keeping all other components the same.  Note that $\lambda_k$ containing no special peaks implies that the penultimate component $\alpha$ in the resulting member of $\mathcal{T}_n$ contains no short units, as required.  If the final component $\lambda_k$ equals $u^3d^3$, then perform the inverse operation, noting that $n \geq 3$ implies $k \geq 2$ in this case.  Thus, the two operations taken together yield an involution, which we will denote by $\psi$, that is defined on all of $\mathcal{V}_n$.

Define the subset $\mathcal{W}_n(1)$ of $\mathcal{W}_n$ as follows:
\begin{align*}
\mathcal{W}_n(1)=\{(\lambda_1,\ldots,\lambda_k) \in \mathcal{W}_n:&\,\lambda_k=u\alpha ud\beta d, \text{ where } |\alpha|\geq 1 \text{ and } \beta \text{ contains only long units }\\
& \text{ and is possibly empty}\}.
\end{align*}
In Lemma \ref{Lem1} below, it is shown $\sigma(W_n(1))=0$.

Now define the subset $\mathcal{W}_n(2)$ of $\mathcal{W}_n$ as consisting of those $(\lambda_1,\ldots,\lambda_k)$ such that one of the following two conditions holds:
\begin{align*}
(\text{a})~&k\geq1 \text{ and } \lambda_k=u(ud)\beta d, \text{ where } \beta \text{ consists of two or more long units, or}\\
(\text{b})~&k\geq2 \text{ and } \lambda_k=u(ud)\tau d, \text{ where } \tau \text{ is a single long unit, and } \lambda_{k-1}=u(ud)\beta d, \text{ where } \beta \text{ consists }\\
&\text{of one or more long units}.
\end{align*}
Define an involution of $\mathcal{W}_n(2)$ by breaking apart or combining the final two components as indicated:
$$\lambda_k=u(ud)\beta d \leftrightarrow \lambda_k=u(ud)\beta' d,\,\lambda_{k+1}=u(ud)\tau d,$$
where $\beta$ consists of two or more long units, the first of which is denoted by $\tau$, and $\beta'=\beta-\tau$.

Let $\mathcal{W}_n'=\mathcal{W}_n-\mathcal{W}_n(1)-\mathcal{W}_n(2)$.  Note that $(\lambda_1,\ldots,\lambda_k) \in \mathcal{W}_n'$ implies $\lambda_k=u(ud)\tau d$, where $\tau$ is a long unit. We decompose $\mathcal{W}_n'$ as $\mathcal{W}_n'=\cup_{i=1}^4\mathcal{W}_n'(i)$, where $\mathcal{W}_n'(i)$ for $1 \leq i \leq 4$ consists of those $(\lambda_1,\ldots,\lambda_k)$ in $\mathcal{W}_n'$ satisfying respectively
\begin{align*}
(\text{1})~&k=1,\\
(\text{2})~&k\geq2 \text{ and } \lambda_{k-1} \text{ is not primitive},\\
(\text{3})~&k\geq2 \text{ and } \lambda_{k-1} \text{ is primitive with no special peaks, or}\\
(\text{4})~&k \geq 2 \text{ and } \lambda_{k-1}=u\alpha(ud)\beta d, \text{ where } |\alpha|\geq 1 \text{ and } \beta, \text{ possibly empty, consists of long units}.
\end{align*}

Below, it is shown in Lemma \ref{Lem2} that $\sigma(\mathcal{W}_n')=0$ using the cases above, and hence $\sigma(\mathcal{W}_n)=0$. This implies $\sigma(\mathcal{T}_n-\mathcal{T}_n')=0$, as desired.
\hfill \qed \medskip

\begin{lemma}\label{Lem1}
If $n \geq 2$, then $\sigma(\mathcal{W}_n(1))=0$.
\end{lemma}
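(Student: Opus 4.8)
The plan is to prove the lemma by constructing a fixed-point-free, sign-reversing involution on $\mathcal{W}_n(1)$. The essential observation is that the sign of a tuple $(\lambda_1,\ldots,\lambda_k)\in\mathcal{T}_{n,k}$ is $(-1)^{n-k}$ and so depends \emph{only} on the number of components $k$. Hence any map that pairs each member of $\mathcal{W}_n(1)$ with another member having exactly one more (or one fewer) component reverses the sign, and if such a map has no fixed points then the members cancel in pairs and $\sigma(\mathcal{W}_n(1))=0$. Thus the whole task reduces to a split/merge operation on components, subject to two bookkeeping constraints: the image must again lie in $\mathcal{W}_n(1)$ (last component primitive of the form $u\alpha(ud)\beta d$ with $\alpha\neq\varnothing$ and $\beta$ consisting of long units only, and every component a Dyck path of length at least three all of whose units are long), and, since $\sum_i|\lambda_i|=n+2k$, any split that raises $k$ by one must simultaneously raise the total length by exactly two.

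I would model the operation on the involution just used for $\mathcal{W}_n(2)$, where the first long unit $\tau$ of the tail is detached and reinstalled inside a wrapper arch $u(ud)\tau d$ of length $|\tau|+2$, so that the length bookkeeping works out to the required increment of two. For $\mathcal{W}_n(1)$ the key point is to leave the distinguishing prefix $\alpha$ of the last component untouched, since it is precisely the nonemptiness of $\alpha$ that keeps us inside $\mathcal{W}_n(1)$ rather than $\mathcal{W}_n(2)\cup\mathcal{W}_n'$. Concretely, when $\beta$ is nonempty I would peel off its first long unit, leave the last component as $u\alpha(ud)\beta' d$ (still of the required form, since $\alpha$ is unchanged and $\beta'$ still consists of long units), and record the detached unit as a newly created component of the prescribed wrapper type inserted immediately before the last component; the inverse reattaches that wrapper's content to the front of $\beta'$ and deletes the wrapper. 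One then checks that this is an involution, that split and merge are genuine inverses, and that each operation changes $k$ by one and the total length by two.

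The main obstacle, and the part needing the most care, is to show that the split and merge operations together are defined on all of $\mathcal{W}_n(1)$ with no fixed points, which forces a careful treatment of the boundary configurations. First, the case $\beta=\varnothing$ leaves nothing in the tail to peel, so such members must instead be images of merges and hence require a recognizable wrapper among the earlier components to fuse; one must verify that exactly one of ``splittable'' and ``mergeable'' applies to every member. Second, the wrapper arch used to store a detached unit has the same outward shape $u(ud)(\text{long units})d$ as a genuine last component of type $\mathcal{W}_n(2)$ or $\mathcal{W}_n'$, so I must confirm that, as an interior component of a $\mathcal{W}_n(1)$ tuple, it can be unambiguously recognized and reversed, using the position of the rightmost such wrapper (and whether consecutive wrappers are adjacent) to disambiguate. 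Verifying that these cases are mutually exclusive and exhaustive, and that the hypothesis $n\geq 2$ guarantees the relevant penultimate component always exists so that no member is left fixed, is exactly what makes the sum of signs vanish; the remaining checks (validity of each produced component, the length increment of two, and invertibility) are routine once the case division is set up correctly.
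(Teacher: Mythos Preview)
Your proposed involution does not work, and the failure is already visible at $n=2$. In $\mathcal{W}_2(1)$ every member has $\beta=\varnothing$: for $k=1$ one has $|\lambda_1|=4$, so the inner path has length $3$ and the constraints $|\alpha|\ge 1$, $|\beta|\ge 2$ (long units) force $\beta=\varnothing$; for $k=2$ one has $|\lambda_2|=3$, giving $|\alpha|=1$ and $\beta=\varnothing$. Thus your split operation never applies. For the $k=2$ members the penultimate component $\lambda_1$ is either $u^3d^3$ or $u(ud)(ud)d$, neither of which is a wrapper $u(ud)\tau d$ with $\tau$ a \emph{long} unit, so your merge never applies either. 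All four members of $\mathcal{W}_2(1)$ are fixed, yet their signed count is $(-2)+2=0$ only because the $k=1$ and $k=2$ contributions happen to cancel numerically. Your assertion that ``the hypothesis $n\ge 2$ guarantees the relevant penultimate component always exists'' is therefore false: $k=1$ with $\beta=\varnothing$ occurs for every $n\ge 2$.

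There is also a structural problem with involutivity that you flag but do not resolve. If the rule is ``merge whenever the penultimate is a wrapper, otherwise split'', then after a merge the new penultimate is the old $\lambda_{k-2}$, which may itself be a wrapper, so a second merge would follow rather than a split back. If instead the rule is ``split whenever $\beta\ne\varnothing$, otherwise merge'', then after a split the new $\beta'$ may still be nonempty, triggering a further split. Neither priority gives an involution. The paper avoids these difficulties by operating on $\alpha$ rather than $\beta$: it pairs $\lambda_k=u\alpha(ud)\beta d$ with $|\alpha|\ge 2$ against $(\lambda_k,\lambda_{k+1})=(u\alpha d,\,u(ud)^2\beta d)$, so that the image automatically has $|\alpha_{\text{new}}|=1$ and the roles are cleanly distinguished. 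Even so, a residual set survives (those with $\alpha=ud$ and $\lambda_{k-1}$ not primitive, together with the $k=1$ case), and the paper disposes of it by applying the earlier involution $\phi$ on the first $k-1$ components and then invoking the Fine-number recurrence $t_n=\sum_{i=1}^{n-2}C_i t_{n-1-i}$. A single fixed-point-free involution of the kind you are aiming for is not supplied, and your outline does not show how to obtain one.
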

\begin{proof}
The result is readily shown if $n =2$, so we may assume $n \geq 3$. We pair members of $\mathcal{W}_n(1)$ of opposite sign by either breaking apart the last component or combining the last two components as indicated:
$$\lambda_k=u\alpha ud\beta d, \, |\alpha|\geq 2 \leftrightarrow \lambda_k=u\alpha d,\,\lambda_{k+1}=u(ud)^2\beta d.$$
The set of survivors of this involution consists of those $k$-tuples $(\lambda_1,\ldots,\lambda_k)$ such that either (i) $k \geq 2$ and $\lambda_k=u(ud)^2\beta d$, with $\beta$ consisting of long units if nonempty and $\lambda_{k-1}$ not primitive, or (ii) $k=1$ and $\lambda_1=u(ud)^2\beta d$, with $\beta$ as in (i).  Note that $n \geq 3$ implies $\beta \neq \varnothing$ in the latter case.  On the survivors satisfying condition (i), we apply the involution $\phi$ defined above to the $(k-1)$-tuple comprising the first $k-1$ components and then append $\lambda_k$ to the resulting vector.  Thus, all members satisfying (i) are paired except for those in which $k=2$ with $\lambda_1=u^2d^2\tau$ and $\lambda_2=u(ud)^2\beta d$, where $\beta$ consists of long units and $\tau$ is a single (long) unit.

Suppose $|\tau|=i+1$ in the decomposition of $\lambda_1$. This implies $$|\beta|=(n+4)-|\lambda_1|-3=n-2-i$$ in $\lambda_2$, and thus $\beta \in \mathcal{E}_{n-2-i}$. Hence summing over all $i$ yields $\sum_{i=1}^{n-2}C_it_{n-1-i}$  possible ordered pairs $(\lambda_1,\lambda_2)$. Further, the survivors in case (ii) above have cardinality $t_n$ since $\beta$ has length $n-1$ and contains only long units.  Thus, the sum of the signs of the remaining unpaired members of $\mathcal{W}_n(1)$ is given by
$$(-1)^{n-2}\sum_{i=1}^{n-2}C_it_{n-1-i}+(-1)^{n-1}t_n=0,$$
as desired, upon observing the recurrence $t_n=\sum_{i=1}^{n-2}C_it_{n-1-i}$ for $n \geq 3$.  Note that this recurrence may be easily realized combinatorially by considering the length $i+1$ of the first unit within a member of $\mathcal{E}_{n-1}$.  Thus, if desired, it is straightforward to pair the remaining members of $\mathcal{W}_n(1)$ of opposite sign upon considering the position of the first return within a member of $\mathcal{E}_{n-1}$.
\end{proof}

\begin{lemma}\label{Lem2}
If $n \geq 3$, then $\sigma(\cup_{i=1}^3\mathcal{W}_n'(i))=-\sigma(\mathcal{W}_n'(4))=(-1)^{n-1}C_{n-2}$, and hence $\sigma(\mathcal{W}_n')=0$.
\end{lemma}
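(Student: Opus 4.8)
The plan is to evaluate the two quantities $\sigma(\cup_{i=1}^3\mathcal{W}_n'(i))$ and $\sigma(\mathcal{W}_n'(4))$ separately, show that each equals $\pm(-1)^{n-1}C_{n-2}$, and then observe that they cancel. Throughout one should keep in mind that every member of $\mathcal{W}_n'$ has last component $\lambda_k=u(ud)\tau d$ with $\tau$ a single long unit, so that only the penultimate component $\lambda_{k-1}$ distinguishes cases (2)--(4). My main idea for cases (2) and (3) is to recycle the involutions $\phi$ and $\psi$ already built on $\mathcal{U}_n$ and $\mathcal{V}_n$: since each acts only on the last (and, when merging, the second-to-last) component, I would apply them verbatim to the prefix $(\lambda_1,\ldots,\lambda_{k-1})$ and then re-append the fixed $\lambda_k$. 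Because $\phi$ preserves the property ``$\lambda_{k-1}$ not primitive'' and $\psi$ preserves ``$\lambda_{k-1}$ primitive with no special peaks'', these prefix maps send case (2) to itself and case (3) to itself, each altering the number of components, hence the sign, by one. Their survivors are exactly the tuples the prefix map cannot touch: $k=2$ with $\lambda_1=u^2d^2\beta$ ($\beta$ a long unit) in case (2), and $k=2$ with $\lambda_1=u^3d^3$ in case (3).

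Next I would total the residual (unpaired) contributions, the paired members contributing $0$. Case (1) forces $k=1$ and $|\tau|=n$, giving $C_{n-1}$ tuples of sign $(-1)^{n-1}$. The case (2) survivors satisfy $|\beta|+|\tau|=n$ with $|\beta|,|\tau|\geq2$, so by the Catalan convolution $\sum_{j=0}^{m}C_jC_{m-j}=C_{m+1}$ they number $\sum_{b+t=n}C_{b-1}C_{t-1}=C_{n-1}-2C_{n-2}$, each of sign $(-1)^n$; the two removed boundary terms $j=0$ and $j=n-2$ account for the $-2C_{n-2}$. The case (3) survivors have $|\tau|=n-1$, giving $C_{n-2}$ tuples of sign $(-1)^n$. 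Summing,
\begin{equation*}
\sigma\Big(\cup_{i=1}^3\mathcal{W}_n'(i)\Big)=(-1)^{n-1}C_{n-1}+(-1)^n\big(C_{n-1}-2C_{n-2}\big)+(-1)^nC_{n-2}=(-1)^{n-1}C_{n-2},
\end{equation*}
where the $C_{n-1}$ terms cancel and the $C_{n-2}$ terms combine.

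For $\mathcal{W}_n'(4)$, whose members have $\lambda_{k-1}=u\alpha(ud)\beta d$ with $|\alpha|\geq1$ and $\beta$ a possibly empty run of long units, I would construct a sign-changing involution acting on the rightmost special peak of $\lambda_{k-1}$, splitting off or absorbing a component in the spirit of $\phi$ and of the pairing in Lemma \ref{Lem1} (for instance, breaking $\lambda_{k-1}$ at its special peak when $|\alpha|\geq2$ and performing the inverse merge when $|\alpha|=1$). The goal is to pair all but a family of survivors counted by $C_{n-2}$, each of sign $(-1)^n$, so that $\sigma(\mathcal{W}_n'(4))=(-1)^nC_{n-2}=-\sigma(\cup_{i=1}^3\mathcal{W}_n'(i))$. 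Adding the two evaluations then gives $\sigma(\mathcal{W}_n')=0$, as asserted.

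I expect the construction of the case (4) involution to be the main obstacle: here one cannot simply inherit $\phi$ or $\psi$, and the simultaneous presence of a special peak in both $\lambda_{k-1}$ and $\lambda_k$ means the splitting rule must be designed so that it remains inside case (4), is genuinely reversible, and isolates precisely $C_{n-2}$ survivors with the correct sign. A secondary point requiring care is checking that the prefix involutions $\phi$ and $\psi$ stay well defined and fixed-point-free away from the stated boundary tuples under the length budget inherited from $\mathcal{W}_n'$ rather than from $\mathcal{T}_n$, and that the images they produce never fall into the previously removed sets $\mathcal{W}_n(1)$ or $\mathcal{W}_n(2)$.
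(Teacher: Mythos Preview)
Your treatment of cases (1)--(3) is exactly the paper's: apply $\phi$ and $\psi$ to the prefix $(\lambda_1,\ldots,\lambda_{k-1})$, re-append $\lambda_k$, and sum the residual contributions via the Catalan convolution. That part is complete and correct.

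The gap is in case (4), which you correctly flag as the obstacle but do not finish. The point you are missing is that no new involution needs to be built. Observe that if $(\lambda_1,\ldots,\lambda_k)\in\mathcal{W}_n'(4)$, then the prefix $\lambda'=(\lambda_1,\ldots,\lambda_{k-1})$ is \emph{literally} an element of $\mathcal{W}_m(1)$ for $m=n-|\tau|$: its last component $\lambda_{k-1}=u\alpha(ud)\beta d$ is primitive with a special peak and $|\alpha|\geq1$, which is precisely the defining condition of $\mathcal{W}_m(1)$. Hence the full multi-step involution $g$ already constructed in Lemma~\ref{Lem1} applies verbatim to $\lambda'$ whenever $m\geq2$, i.e.\ whenever $|\tau|\leq n-2$; one then re-appends $\lambda_k$ exactly as you did for $\phi$ and $\psi$. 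The only tuples this cannot touch are those with $m=1$, forcing $k=2$, $|\tau|=n-1$, and $\lambda_1=u(ud)^2d$; this is a set $T$ of cardinality $C_{n-2}$ with sign $(-1)^{n-2}$, giving $\sigma(\mathcal{W}_n'(4))=(-1)^nC_{n-2}$ immediately. So your anticipated difficulty---designing a bespoke splitting rule that stays inside case (4) and isolates the right survivors---evaporates once you recognize that the prefix already lives in the domain of Lemma~\ref{Lem1}. Your secondary concern about images escaping into $\mathcal{W}_n(1)$ or $\mathcal{W}_n(2)$ is also moot: none of the prefix involutions alter $\lambda_k$, so membership in $\mathcal{W}_n'$ is preserved throughout.
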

\begin{proof}
We consider several cases on $\lambda=(\lambda_1,\ldots,\lambda_k) \in \mathcal{W}_n'$ whose last component $\lambda_k$ is given by $\lambda_k=u(ud)\tau d$, where $\tau$ is a long unit. If $\lambda \in \mathcal{W}_n'(1)$, then $k=1$ implies $|\tau|=n$ and thus $\sigma(\mathcal{W}_n'(1))=(-1)^{n-1}C_{n-1}$.  If $\lambda \in \mathcal{W}_n'(2)$, we apply the mapping $\phi$ defined above to $\lambda'=(\lambda_1,\ldots,\lambda_{k-1})$ and then append $\lambda_k$ to $\phi(\lambda')$.  This operation yields an involution on $\mathcal{W}_n'(2)$ that is not defined for those members in which $k=2$ with $\lambda_1=u^2d^2\sigma$ and $\sigma$ is a unit.  Upon considering $|\sigma|=i+1$ for $1 \leq i \leq n-3$, one gets
$$\sum_{i=1}^{n-3}C_iC_{n-2-i}=C_{n-1}-2C_{n-2}$$
unpaired members of $\mathcal{W}_n'(2)$, by the recurrence for the Catalan numbers.  If $\lambda \in \mathcal{W}_n'(3)$, we apply the mapping $\psi$ defined above to $\lambda'$ and then append $\lambda_k$ to $\psi(\lambda')$.  This operation yields an involution on  $\mathcal{W}_n'(3)$ except for those members where $k=2$ and $\lambda_1=u^3d^3$, of which there are $C_{n-2}$ possibilities.  Combining the contributions from $W_n'(i)$ for $1 \leq i \leq 3$ yields
$$\sigma(\cup_{i=1}^3W_n'(i))=(-1)^{n-1}C_{n-1}+(-1)^{n-2}(C_{n-1}-2C_{n-2})+(-1)^{n-2}C_{n-2}=(-1)^{n-1}C_{n-2}.$$

For the second statement, let $T$ denote the subset of $\mathcal{W}_n'(4)$ consisting of those members where $k=2$ and $\lambda_1=u(ud)^2d$.  Since $\sigma(T)=(-1)^{n-2}C_{n-2}$, we need to show $\sigma(\mathcal{W}_n'(4)-T)=0$.  Note that within the final component $\lambda_k=u(ud)\tau d$ of $\lambda \in \mathcal{W}_n'(4)-T$, we must have $2 \leq |\tau|\leq n-2$.  We may then apply the involution $g$ from Lemma \ref{Lem1} to $\lambda'$ (as $|\tau|\leq n-2$), and to the resulting vector $g(\lambda')$, we append the component $\lambda_k$.  This operation is seen to yield a sign-changing involution of $\mathcal{W}_n'(4)-T$, which completes the proof.
\end{proof}

\bigskip
\hrule
\bigskip

\noindent 2020 {\it Mathematics Subject Classification}: Primary 05A19; Secondary 11C20, 15B05.

\noindent \emph{Keywords:} Hessenberg--Toeplitz matrix, Trudi's formula, Schr\"{o}der number, Fine number, Catalan number, Schr\"{o}der path, Dyck path.

\bigskip
\hrule
\bigskip

\end{document}